\newcommand{\be}{\begin{equation}}
\newcommand{\ee}{\end{equation}}
\newcommand{\beq}{\begin{eqnarray}}
\newcommand{\eeq}{\end{eqnarray}}
\def\H{{\mathbb H}}
\def\R{{\mathfrak R}}
\newtheorem{prop}{Proposition}[section]
\newtheorem{theo}[prop]{Theorem}
\newtheorem{lemm}[prop]{Lemma}
\newtheorem{coro}[prop]{Corollary}
\newtheorem{rema}[prop]{Remark}
\def\begeq{\begin{equation}}
\def\endeq{\end{equation}}
\def\p{\partial}
\def\R{\mathbb R}
\def\<{\langle}
\def\>{\rangle}
\def\d{\delta}
\def\s{\sigma}
\def\v{\varepsilon}
\def \ds{\displaystyle}
\def\S{\mathbb  {S}}
\def\odot{\setbox0=\hbox{$\bigcirc$}\relax \mathbin {\hbox
to0pt{\raise.5pt\hbox to\wd0{\hfil $\wedge$\hfil}\hss}\box0 }}
\numberwithin{equation} {section}
\def\tilde{\widetilde}
\begin{document}

\title[New weighted Alexandrov-Fenchel type inequalities]{New weighted Alexandrov-Fenchel  type inequalities  and Minkowski inequalities in space forms}

\subjclass[2020]{53C42, 53E10.}

\keywords{weighted Alexandrov-Fenchel inequalities, space forms, constraint inverse curvature flow}

\author{Jie Wu}
\address{School of Mathematical Sciences, Zhejiang University, Hangzhou 310058, P. R. China}
\email{wujiewj@zju.edu.cn}

\begin{abstract} In this paper, we establish a broad class of new sharp Alexandrov-Fenchel inequalities involving general convex weight functions for static convex hypersurfaces in hyperbolic space.
Additionally, we  derive new weighted Minkowski-type inequalities for static convex hypersurfaces in hyperbolic space $\H^n$ and for convex hypersurfaces in the sphere $\S^n$. The tools we shall use are the locally constrained  inverse curvature flows in hyperbolic space and in the sphere.
\end{abstract}

\maketitle

\

\section{Introduction}

The Alexandrov-Fenchel inequalities for  quermassintegrals of convex domains  play an important role in classical geometry.
In Euclidean space, the celebrated Alexandrov-Fenchel inequalities \cite{AF1,AF2} for convex  hypersurfaces $\Sigma \subset\R^n$ state that
\begin{equation}\label{eq01}
\frac 1{\omega_{n-1}}\int_{\Sigma} H_kd\mu \ge \left(\frac 1{\omega_{n-1}}\int_{\Sigma} H_{l} d\mu \right)^{\frac {n-1-k}{n-1-l}}, \quad 0\le l< k\le  n-1,
\end{equation}
 where $H_k$ denotes the normalized $k$-th mean curvature of $\Sigma$ and $\omega_{n-1}$ is the area of the unit sphere $\S^{n-1}$. Equality in (\ref{eq01}) holds if and only if $\Sigma$ is a sphere.
When $k=1,\, l=0$, (\ref{eq01}) reduces to
\begin{equation}\label{Mineq}
\frac 1{\omega_{n-1}}\int_{\Sigma} H_1d\mu \ge \left(\frac {|\Sigma|}{\omega_{n-1}} \right)^{\frac {n-2}{n-1}},
\end{equation}
which is usually called the Minkowski inequality. The Alexandrov-Fenchel inequalities (\ref{eq01}) have been extended to certain classes of non-convex hypersurfaces (see \cite{CW, GuanLi,Q} for instance).

To establish the Alexandrov-Fenchel type inequalities for quermassintegrals in space forms $N^n(\v)$, we consider the simply connected space forms of constant sectional curvature $\v=-1,0$ or $1$, which are hyperbolic space $\H^n$, Euclidean space $\R^n$, and the sphere $\S^n$, respectively. These space forms can be viewed as a warped product manifold $I\times \S^{n-1}$ equipped with the metric
$$\bar g=dr^2+\lambda(r)^2 g_{\S^{n-1}},$$
  where $g_{\S^{n-1}}$ denotes the standard round metric on $\S^{n-1}\subset \R^n$. Specifically, $N^n(-1)$ corresponds to  hyperbolic space $\H^n$ with
 $\lambda(r)=\sinh r,$ where $r\in[0, \infty) $; $N^n(0)$ corresponds to Euclidean space $\R^n$ with  $\lambda(r)=r,$ where  $r\in[0, \infty) $; and $N^n(1)$ corresponds to the sphere $\S^n$ with $\lambda(r)=\sin r,$ where $r\in[0, \pi).$

For any smooth convex domain $\Omega$ in the space form $N^n(\v)$ with boundary $\p\Omega=\Sigma$,
the $k$-th  quermassintegral  $W_k(\Omega)$ can be interpreted as the measure of the set of $k$-dimensional totally geodesic subspaces which intersect $\Omega$ in integral geometry \cite{San}.  In $\R^n$,  quermassintegrals coincide with curvature integrals up to a constant multiple.  In $\H^n$ and $\S^n$, however, quermassintegrals and curvature integrals differ but remain closely related through the following  recursive formulas (see e.g. \cite[Proposition 7]{Solanes}):
 \begin{eqnarray}
 &&W_0(\Omega)=\hbox{Vol}(\Omega), \quad W_1(\Omega)=\frac 1 {n-1}|\Sigma|,\quad W_n(\Omega)=\frac{1}{n}\omega_{n-1}, \nonumber\\
 &&W_{k+1}(\Omega)=\frac{1}{n-1-k}\int_{\Sigma}H_k d\mu+\v\frac{k}{n-1-k}W_{k-1}(\Omega),\quad k=1, \cdots, n-2,\label{quermass}
 \end{eqnarray}
where $H_k$ denotes the normalized $k$-th mean curvature of $\Sigma$.
A key feature of  quermassintegrals in $N^n(\v)$ is their variational property, analogous to curvature integrals in $\R^n$:
\begin{equation}\label{vari}
\frac{d}{dt}W_k(\Omega_t)=\int_{\partial\Omega_t} FH_k d\mu_t, \quad k=0, \cdots, n-1,
\end{equation}
for any normal variation with speed $F$.

 In hyperbolic space $\H^n$,  Wang and Xia \cite{WX} established the following Alexandrov-Fenchel type inequality for h-convex domains (i.e., domains whose boundary $\partial\Omega$ satisfies $\kappa_i\geq 1$ for all principal curvatures $\kappa_i$), using a quermassintegral preserving curvature flow:
\begin{equation}\label{HAF}
W_k(\Omega)\geq f_k\circ f_l^{-1}(W_l(\Omega)), \quad 0\leq l<k\leq n.
\end{equation}
with equality if and only if $\Omega$ is a geodesic ball. Here, $f_k(r)=W_k(B_r)$, denotes the $k$-th quermassintegral of the geodesic ball $B_r$ of radius $r$,  and $f_l^{-1}$ is the inverse function of $f_l$.  Inequality (\ref{HAF}) with $k=3$ and $l=1$ was proved earlier by Li, Wei and Xiong \cite{LWX} for star-shaped and $2$-convex hypersurfaces (i.e., $H_1, H_2>0$).  Ge, Wang  and the author of this paper \cite{GWW1} extended (\ref{HAF})
to $k=2m+1 (0<2m<n-1)$ and $l=1$ for $h$-convex domains. Their work derived an optimal Sobolev type inequality for the Gauss-Bonnet curvature $L_k$, which implies (\ref{HAF}) for the odd $k$. Hu, Li and Wei \cite{HLW} provided an alternative proof of (\ref{HAF})  using Brendle-Guan-Li's flow \cite{BGL}.  There have been significant efforts to relax the  h-convexity requirement in (\ref{HAF}) by considering weaker geometric conditions. For recent progress in this direction, we refer the readers to \cite{ACW, AHL, BGL, HL} and related works.

Recently, there have been extensive interest in establishing  weighted geometric inequalities in space forms $N^n(\v)$,
 comparing weighted curvature integrals to quermassintegrals. These inequalities
 generalize the classical Alexandrov-Fenchel inequalities and are categorized into two classes: those weighted by $\lambda'(r)$, derived from the radial derivative of the warped product function $\lambda(r)$, and those weighted by $\Phi(r)$. The primitive function $\Phi(r)$ is explicitly given by 
\begin{equation}\label{Phi}
\Phi(r)=\int_0^r \lambda(s)ds=
\left\{\begin{aligned}
 & \cosh r-1, \quad \v=-1, \\
 & \;\,\frac {r^2} 2, \qquad\qquad \v=0,\\
&  1-\cos r, \qquad \v=1,
\end{aligned} \right.
\end{equation}
corresponding to hyperbolic space $\H^n$, Euclidean space $\R^n$, and the sphere $\S^n$, respectively.
  Notably, in Euclidean space $\R^n$, the weight $\lambda'(r)=1$, as $\lambda(r)=r$. Consequently, the weighted inequalities with $\lambda'(r)$ reduce to the classical unweighted Alexandrov-Fenchel inequalities.

In hyperbolic space $\H^n$, the Alexandrov-Fenchel inequalities  weighted with the factor $\lambda'(r)=\cosh r$ are closely related to the Penrose inequality for asymptotically hyperbolic graphs, see for example,  \cite{DGS, deLG, GWW2, GWWX} and related references. For $k=1$,  Brendle-Hung-Wang \cite{BHW}
established the following Minkowski type inequality
\begin{equation}\label{eq03}
    \int_{\Sigma}\bigg(\lambda'(r) H_1-\langle\bar{\nabla}\lambda'(r),\nu\rangle\bigg)d\mu\geq \omega_{n-1}^{\frac 1{n-1}}{|\Sigma|}^{\frac{n-2}{n-1}},
\end{equation}
and  de Lima and Gir\~ao \cite{deLG}  proved the following related inequality
\begin{equation}\label{eq04}
    \int_{\Sigma}\lambda'(r) H_1 d\mu\geq \omega_{n-1}\left(\left(\frac {|\Sigma|}{\omega_{n-1}}\right)^{\frac{n-2}{n-1}}+\left(\frac {|\Sigma|}{\omega_{n-1}}\right)^{\frac{n}{n-1}}\right),
\end{equation}
 if $\Sigma$ is star-shaped and mean convex (i.e. $H_1>0$).  Later, Ge, Wang and the author \cite{GWW2} generalized this to a family of  higher-order weighted geometric inequalities for  h-convex hypersurfaces,
 \begin{equation}\label{eq1t}
  \int_{\Sigma}\lambda'(r) H_{2k+1} d\mu \ge \omega_{n-1}{\left(\left(\frac{|\Sigma|}{\omega_{n-1}}\right)^{\frac{n}{(k+1)(n-1)}}+\left(\frac{|\Sigma|}{\omega_{n-1}}\right)^{\frac{n-2k-2}{(k+1)(n-1)}} \right)}^{k+1}.
\end{equation}
As an application, we proved a
positive mass theorem for the Gauss-Bonnet-Chern mass of asymptotically hyperbolic graphs.
 (\ref{eq1t}) was  extended by Hu, Li and Wei \cite{HLW} for h-convex hypersurfaces in  hyperbolic space $\H^n$ as follows:
\begin{equation}\label{hlw}
\int_{\Sigma}\lambda'(r) H_k d\mu\geq \phi_{k,l}(W_l(\Omega)),\quad 0\leq l\leq k\leq n-1,
\end{equation}
with equality  if and only if $\Sigma$ is a geodesic sphere centered at the origin. Here $\phi_{k,l}$ denotes the unique strictly increasing function such that equality holds in (\ref{hlw}) for geodesic spheres.

Now, we turn to the study of Alexandrov-Fenchel type inequalities weighted with the factor $\Phi(r)$ in space forms $N^n(\v)$.
In Euclidean space $\R^n$, Wei and Zhou \cite{WZ} proved for star-shaped and $k$-convex  hypersurfaces (i.e., $H_j>0$,\, $\forall \,1\leq j\leq k$)
\begin{equation}\label{Wei1}
       \int_{\Sigma}\Phi H_k d\mu \geq \frac 12\omega_{n-1}\bigg(\frac{\int _{\Sigma} H_{l}d\mu}{\omega_{n-1}}\bigg)^{\frac{n+1-k}{n-1-l}}, \quad 0\leq l<k\leq n-1,
     \end{equation}
where $\Phi=\frac 12 r^2$ as defined in (\ref{Phi}).
Equality holds in (\ref{Wei1}) if and only if $\Sigma$ is a geodesic sphere centered at the origin. This result
generalizes earlier work in \cite{GR,KM1, KM2}.
Recently, the author of this paper \cite{Wu} extended this result to a broader weighted class involving  $\Phi^{\alpha}$ for any real number $\alpha\geq 1$, proving that for any smooth closed star-shaped and $k$-convex hypersurface $\Sigma$ in $\R^n,$
\begin{equation}\label{W1}
       \int_{\Sigma}\Phi^{\alpha} H_k d\mu\geq \frac 1 {2^{\alpha}}\omega_{n-1}\bigg(\frac{\int _{\Sigma} H_{l}d\mu}{\omega_{n-1}}\bigg)^{\frac{n-1-k+2\alpha}{n-1-l}}, \quad 0\leq l<k\leq n-1.
     \end{equation}
The proof of (\ref{W1}) utilizes the normalized(rescaled) inverse curvature flow
\begin{equation}\label{flow}
\frac{\partial}{\partial t} X=\bigg(\frac {H_{k-1}}{H_k}-u\bigg)\nu,
\end{equation}
introduced independently by Gerhardt \cite{Gerhardt} and Urbas \cite{Urbas}. The key step involves establishing  the monotonicity of $\int_{\Sigma} \Phi^{\alpha} H_k$ along the flow (\ref{flow1}) for all $\alpha\geq 1,$ which relies crucially on an iteration formula for the Newton operator
$$T_k=\sigma_k I-T_{k-1}h, $$
where $h$ denotes the second fundamental form.
Notably, this iterative relationship for $T_k$ appears to be employed for the first time in establishing the monotonicity along curvature flows.
By the same method, Kwong and Wei \cite{KW2} obtained a further generalization of (\ref{W1}), replacing  the weight $\Phi^{\alpha}$ ($\alpha\geq 1$) with $g(\Phi)$ for any non-decreasing, convex, $C^2$ positive function $g$. Specifically, they proved
\begin{equation}\label{KW}
\int_{\Sigma} g(\Phi) H_k d\mu\geq \chi_k\circ \xi_l^{-1}(W_l(\Omega)), \quad 0\leq l\leq k\leq n-1,
\end{equation}
where $\Phi=\frac 12 r^2$,  $\chi_k(r)=\int_{S^{n-1}(r)}g(\Phi)H_k d\mu$ and $\xi_l(r)=\omega_{n-1}r^{n-l}$ are both strictly increasing functions on $\R_{+},$ with $\xi_l^{-1}$  denoting the inverse of $\xi_l(r).$

In hyperbolic space $\H^n$, Wei and Zhou \cite{WZ} proved for h-convex hypersurfaces,
\begin{equation}\label{Wei2}
       \int_{\Sigma}\Phi H_k d\mu \geq \xi_{k,l} (W_l(\Omega)), \quad 0\leq l\leq k\leq n-1.
     \end{equation}
 with equality  if and only if $\Sigma$ is a geodesic sphere centered at the origin. Here $\xi_{k,l}$ denotes the unique strictly increasing function such that equality holds in (\ref{Wei2}) for geodesic spheres.  For a related work, we refer to \cite{KW1}.

Inspired by the above works, it is natural to ask whether there is a family of Alexandrov-Fenchel type inequalities  with more general weight funciton
$f(\Phi)$ in  hyperbolic space $\H^n$.
In this paper, we first establish the following Alexandrov-Fenchel quermassintegral inequalities in $\H^n$ with a general convex weight for static convex hypersurfaces.
Here a bounded domain $\Omega$ in $\H^n$ with smooth boundary $\p\Omega=\Sigma$,  is called \textit{static convex}  if its second fundamental form satisfies
$$h_{ij} > \frac{u}{\lambda'(r)} g_{ij}>0, \quad \mbox{everywhere on} \;\Sigma,$$
where $\lambda'(r)=\cosh r$, $\nu$  is the unit outward normal,
and $u=\langle\lambda (r)\partial r, \nu\rangle$ is the support function of the hypersurface $\Sigma$.
This definition was first introduced by Brendle and Wang \cite{BW}. We remark that static convexity implies the strict convexity, but is weaker than $h$-convexity as $\frac{u}{\lambda'(r)}<1$ necessarily holds in $\H^n$.

\begin{theo}\label{thm1.1}
Let  $\Omega$ be a bounded  static convex domain with smooth boundary $\Sigma$ in $\H^n$ .
Then for  all $k = 2,\cdots, n-1,$ and any non-decreasing, convex $C^2$ positive function $f$
which satisfies
\begin{equation}\label{f condi}
f'(s)\geq \frac{k}{k-1}\frac{f(s)}{s},
\end{equation}
we have
\begin{equation}
\int_{\Sigma} f(\Phi) H_k d\mu \geq \chi_k\circ f_l^{-1} \left(W_l(\Omega)\right), \quad  0\leq l\leq k, \label{W2}
\end{equation}
and
\begin{equation}\label{W0}
\int_{\Sigma} f(\Phi) H_k d\mu \geq \chi_k\circ h^{-1} \left(\int_{\Omega}\lambda'(r)\, dv\right),
\end{equation}
where $\Phi=\cosh r -1$ as defined in (\ref{Phi}), $\chi_k(r):=\int_{\partial B_r} f(\Phi) H_k d\mu$ on the geodesic ball $B_r$ of radius $r$, $f_l(r)=W_l(B_r),$ denotes the $l$-th quermassintegral of the geodesic ball $B_r$, $f_l^{-1}$ is the inverse function of $f_l$, and $h(r)=\int_{B_r}\lambda'(r) dv=\frac 1 n\omega_{n-1}(\sinh r)^{n}$.
Moreover, equality in (\ref{W2}) (\eqref{W0} resp.) holds if and
only if $\Omega$ is a geodesic ball centered at the origin.
\end{theo}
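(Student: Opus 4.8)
The plan is to run a single locally constrained inverse curvature flow and to extract all of (\ref{W2})--(\ref{W0}) from the monotonicity of $\int_\Sigma f(\Phi)H_k\,d\mu$ together with the monotonicity of the quermassintegrals $W_l$ and of the weighted volume $\int_\Omega\lambda'(r)\,dv$. Concretely, I would evolve $\Sigma=\partial\Omega$ by
\begin{equation}\label{flowproof}
\frac{\partial}{\partial t}X=\left(\lambda'(r)\frac{H_{k-1}}{H_k}-u\right)\nu .
\end{equation}
By the variational identity (\ref{vari}) and the Minkowski formula $\int_\Sigma\lambda' H_{k-1}\,d\mu=\int_\Sigma u H_k\,d\mu$ (obtained by integrating $\mathrm{div}_\Sigma(T_{k-1}Y^T)=(n-k)\sigma_{k-1}\lambda'-k\sigma_k u$ for the conformal field $Y=\lambda\partial_r$, where $T_{k-1}$ is the Newton tensor), the speed $F:=\lambda' H_{k-1}/H_k-u$ satisfies $\int_\Sigma FH_k\,d\mu=0$, so $W_k(\Omega_t)$ is preserved. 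I would first record that (\ref{flowproof}) preserves static convexity and exists for all time, converging smoothly to a geodesic sphere $\partial B_{r_\infty}$ centered at the origin, with $F\equiv0$ only on such spheres; this is exactly the setting isolated by Brendle and Wang \cite{BW} and used in \cite{BGL,HLW}. Since $W_k$ is preserved, $r_\infty=f_k^{-1}(W_k(\Omega))$.

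The heart of the matter is the monotonicity $\frac{d}{dt}\int_{\Sigma_t}f(\Phi)H_k\,d\mu_t\le 0$. To establish it I would use $\partial_t\Phi=Fu$ (from $\Phi'=\lambda$ and $\partial_t r=Fu/\lambda$), the area evolution $\partial_t(d\mu)=F\sigma_1\,d\mu$, and the evolution of $\sigma_k=\binom{n-1}{k}H_k$,
\begin{equation*}
\partial_t\sigma_k=-T_{k-1}^{ij}\nabla_i\nabla_j F-F\big(\sigma_1\sigma_k-(k+1)\sigma_{k+1}\big)+F(n-k)\sigma_{k-1},
\end{equation*}
whose last term carries the ambient curvature $\v=-1$. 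After the $\sigma_1\sigma_k$ contributions from $\partial_t\sigma_k$ and from the area term cancel, I would integrate $\int f(\Phi)T_{k-1}^{ij}\nabla_i\nabla_j F$ by parts twice, using that $T_{k-1}$ is divergence free and the Hessian identity $\nabla^2_{ij}\Phi=\lambda'g_{ij}-u h_{ij}$; this transfers both derivatives onto the weight and produces the terms $f''(\Phi)T_{k-1}^{ij}\nabla_i\Phi\nabla_j\Phi$ and $f'(\Phi)\big(\lambda'(n-k)\sigma_{k-1}-k u\sigma_k\big)$. Substituting $F$ and invoking the relation $\lambda'=1+\Phi$ to expose the factor $\Phi$, together with the iteration $T_k=\sigma_k I-T_{k-1}h$ and the Newton--MacLaurin inequalities, I expect the whole expression to reorganize into a sum of terms of fixed sign plus one term proportional to $(k-1)\Phi f'(\Phi)-k f(\Phi)$. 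Convexity $f''\ge0$, paired with $T_{k-1}\ge0$, disposes of the Hessian-squared term, while hypothesis (\ref{f condi}), which is precisely $(k-1)\Phi f'(\Phi)\ge k f(\Phi)$, forces the last term to have the correct sign.

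Granting this monotonicity, I would close the argument as follows. Along (\ref{flowproof}) one computes $\frac{d}{dt}W_l(\Omega_t)=\int_\Sigma\lambda'\big(\frac{H_{k-1}}{H_k}H_l-H_{l-1}\big)d\mu\ge0$ for $1\le l<k$ by Newton--MacLaurin (the ratios $H_{j-1}/H_j$ being nondecreasing in $j$), the case $l=0$ following from $\frac{H_{k-1}}{H_k}\ge H_1^{-1}$ and the Heintze--Karcher inequality $\int_\Sigma\lambda' H_1^{-1}\,d\mu\ge\int_\Sigma u\,d\mu=n\int_\Omega\lambda'\,dv$, while a weighted version of the same estimate gives $\frac{d}{dt}\int_{\Omega_t}\lambda'\,dv=\int_\Sigma\lambda' F\,d\mu\ge0$. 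Hence $f_l^{-1}(W_l(\Omega))\le r_\infty$ for all $0\le l\le k$ and $h^{-1}(\int_\Omega\lambda'\,dv)\le r_\infty$. Since $\chi_k(r)=\omega_{n-1}f(\cosh r-1)\cosh^k r\,\sinh^{n-1-k}r$ is strictly increasing, combining these with $\int_\Sigma f(\Phi)H_k\,d\mu\ge\chi_k(r_\infty)$ yields both (\ref{W2}) for every $0\le l\le k$ and (\ref{W0}); equality propagates back through the Newton--MacLaurin and (\ref{f condi}) inequalities to force $\Sigma$ to be a centered geodesic sphere.

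The main obstacle is the sign analysis behind the key monotonicity: the speed $F$ changes sign (it vanishes on the limiting sphere), so no summand is pointwise signed, and the decrease must emerge only after the integrations by parts and the substitution of $F$, where the weight-derivative contributions are balanced against the curvature terms. Isolating the exact combination $(k-1)\Phi f'(\Phi)-k f(\Phi)$ --- which dictates the precise form of (\ref{f condi}) and the restriction $k\ge 2$ --- is the delicate computational core; a secondary technical point is the weighted Heintze--Karcher inequality underlying the monotonicity of $\int_\Omega\lambda'\,dv$ needed for (\ref{W0}).
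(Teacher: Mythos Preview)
Your overall strategy---run a locally constrained inverse curvature flow, prove that $\int_\Sigma f(\Phi)H_k\,d\mu$ is non-increasing while $W_l$ and $\int_\Omega\lambda'\,dv$ are non-decreasing, and read off the inequalities from convergence to a centered sphere---is exactly the paper's. The substantive difference is the choice of flow: you run the Brendle--Guan--Li flow \eqref{flowproof} with speed $F_{\mathrm{BGL}}=\lambda' H_{k-1}/H_k-u$, whereas the paper uses the Scheuer--Xia flow $\partial_tX=(H_{k-1}/H_k-u/\lambda')\nu$, i.e.\ $F_{\mathrm{SX}}=F_{\mathrm{BGL}}/\lambda'$. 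Your flow buys you something the paper does not have directly: $W_k$ is preserved exactly (so $r_\infty=f_k^{-1}(W_k(\Omega))$ is explicit), and $\frac{d}{dt}W_0\ge 0$ follows from the ordinary Heintze--Karcher inequality, whereas the paper has to deduce the case $l=0$ of \eqref{W2} indirectly from \eqref{W0} together with an auxiliary inequality from \cite{HL}.

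There are, however, two genuine gaps in your route that the paper's choice avoids. First, the long-time existence and smooth convergence of the flow under \emph{static} convexity is what \cite[Theorem 1.6]{HL} proves for the Scheuer--Xia flow; the references you invoke, \cite{BGL,HLW}, establish convergence of the Brendle--Guan--Li flow only for $h$-convex initial data, so your assertion that \eqref{flowproof} ``preserves static convexity and exists for all time'' is not covered by the cited literature and would require new work. Second, as you yourself flag, the monotonicity of $\int_\Omega\lambda'\,dv$ along \eqref{flowproof} reduces to a \emph{weighted} Heintze--Karcher inequality $\int_\Sigma(\lambda')^2/H_1\,d\mu\ge\int_\Sigma\lambda' u\,d\mu$, which is not the result in \cite{B13}. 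With the Scheuer--Xia speed the extra $\lambda'$ is absent and the ordinary Heintze--Karcher inequality of \cite{B13} applies verbatim; this is precisely why the paper chooses that flow.

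On the core monotonicity computation, your outline captures the right ingredients (integration by parts against $T_{k-1}$, the Hessian identity $\nabla^2\Phi=\lambda' g-uh$, Newton--MacLaurin, convexity of $f$, and the threshold condition $(k-1)\Phi f'(\Phi)\ge kf(\Phi)$). Two points are missing from your sketch that the paper uses essentially: (i) static convexity $h_{ij}>\frac{u}{\lambda'}g_{ij}$ enters \emph{inside} the monotonicity proof (in the paper's term $II$, when $\nabla_i(u/\lambda')$ is expanded) and is not merely a convergence hypothesis; (ii) after all reductions a residual $f''$-term survives whose sign is controlled not by the bare iteration $T_k=\sigma_k I-T_{k-1}h$ but by the derived identity
\[
\frac{1}{C_{n-1}^k}\,T_{k-1}^{ij}\,\frac{H_{k-1}}{H_k}\;-\;\frac{1}{C_{n-1}^{k-1}}\,T_{k-2}^{ij}
\;=\;\frac{H_{k-1}^2}{H_k}\,g^{js}\,\frac{\partial}{\partial h_i^{\,s}}\!\left(\frac{H_k}{H_{k-1}}\right),
\]
which is positive semidefinite because $H_k/H_{k-1}$ is elliptic. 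If you switch to the Scheuer--Xia flow, both of your flagged obstacles disappear and the monotonicity computation goes through as above; if you insist on \eqref{flowproof}, you must first supply static-convex convergence for that flow and a weighted Heintze--Karcher estimate.
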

Theorem 1.1 generates a broad class of geometric inequalities, where each convex non-decreasing positive function satisfying \eqref{f condi} induces an associated  inequality. First,  observe that condition \eqref{f condi} is equivalent to the nonnegativity of the derivative
\begin{equation}\label{f'}
\left(\frac{f(s)}{s^{\frac{k}{k-1}}}\right)'\geq 0.
\end{equation}
Consequently, a natural class of functions $f$ satisfying all the hypotheses in Theorem 1.1 is given by
\begin{equation}
f(s)=g(s)s^{\alpha},\qquad \alpha\geq \frac{k}{k-1},
\end{equation}
where $g$ is any non-decreasing, convex, $C^2$ positive function. In particular, choosing $g(s)\equiv 1$, yields the following corollary, which may be interpreted as the hyperbolic counterpart of the weighted Alexandrov-Fenchel inequality stated in \eqref{W1}.

\begin{coro}
Let  $\Omega$ be a  bounded static convex domain  with smooth boundary $\Sigma$ in $\H^n$.
Then for all $k = 2,\cdots, n-1,$ and any real number $\alpha\geq \frac{k}{k-1}$, we have
\begin{equation}\label{W2'}
\int_{\Sigma} {\Phi}^{\alpha} H_k d\mu \geq \psi_k\circ f_l^{-1} \left(W_l(\Omega)\right), \quad 0\leq l\leq k,
\end{equation}
and
\begin{equation}
\int_{\Sigma} {\Phi}^{\alpha} H_k d\mu \geq \psi_k\circ h^{-1} \left(\int_{\Omega}\lambda'(r)\, dv\right), \label{W0'}
\end{equation}
where $\Phi=\cosh r -1,$  $\psi_k(r):=\int_{\partial B_r} {\Phi}^{\alpha} H_k d\mu$ on the geodesic ball $B_r$ of radius $r$, $f_l(r)=W_l(B_r),$ $f_l^{-1}$ is the inverse function of $f_l$ and $h(r)=\int_{B_r}\lambda'(r) dv=\frac 1 n\omega_{n-1}(\sinh r)^{n}$.
 Moreover, equality in \eqref{W2'} (\eqref{W0'} resp.) holds if and only if $\Sigma$ is a geodesic sphere centered at the the origin.
\end{coro}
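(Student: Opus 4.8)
The statement is an immediate specialization of Theorem~\ref{thm1.1}: the plan is to apply that theorem with the weight function $f(s)=s^{\alpha}$ and simply check that this $f$ meets every hypothesis when $\alpha\geq\frac{k}{k-1}$. Once the hypotheses are verified, the two inequalities and their equality cases transfer directly, with no further analytic work.

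First I would confirm the structural conditions on $f(s)=s^{\alpha}$. Since $\alpha\geq\frac{k}{k-1}>1$, we have $f'(s)=\alpha s^{\alpha-1}>0$ and $f''(s)=\alpha(\alpha-1)s^{\alpha-2}>0$ for $s>0$, so $f$ is positive, non-decreasing, and convex. Regularity is not an issue on the relevant range: static convexity forces the support function $u$ to be positive, so $\Sigma$ is star-shaped about the origin and $r$ stays bounded below by a positive constant; hence $\Phi=\cosh r-1$ is bounded away from $0$ and $f=s^{\alpha}$ is $C^{2}$ (indeed smooth) there, even in the regime $1<\alpha<2$ that arises when $k\geq3$.

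Next I would verify the key assumption \eqref{f condi}. For $f(s)=s^{\alpha}$ one has $f'(s)=\alpha s^{\alpha-1}$ and $\frac{f(s)}{s}=s^{\alpha-1}$, so the inequality $f'(s)\geq\frac{k}{k-1}\frac{f(s)}{s}$ collapses to $\alpha\geq\frac{k}{k-1}$, exactly the hypothesis of the corollary. Equivalently, via the reformulation \eqref{f'}, the function $\frac{f(s)}{s^{k/(k-1)}}=s^{\alpha-k/(k-1)}$ has non-negative exponent and is therefore non-decreasing, so $\bigl(f(s)/s^{k/(k-1)}\bigr)'\geq0$.

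With all hypotheses in place, Theorem~\ref{thm1.1} applies. Its normalizing function $\chi_k(r)=\int_{\partial B_r}f(\Phi)H_k\,d\mu$ becomes precisely $\psi_k(r)=\int_{\partial B_r}\Phi^{\alpha}H_k\,d\mu$, so substituting $f(s)=s^{\alpha}$ turns \eqref{W2} into \eqref{W2'} and \eqref{W0} into \eqref{W0'}; the equality characterization---a geodesic sphere centered at the origin---is inherited verbatim. As the argument is a single substitution, there is no genuine obstacle: the entire analytic content sits inside Theorem~\ref{thm1.1}, and the only mild point worth recording is the $C^{2}$ regularity of $s^{\alpha}$ near $s=0$, which is handled above by the positivity of the support function.
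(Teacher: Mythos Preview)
Your proposal is correct and matches the paper's approach exactly: the paper derives the corollary by taking $f(s)=g(s)s^{\alpha}$ with $g\equiv 1$ in Theorem~\ref{thm1.1}, and your verification that $s^{\alpha}$ satisfies all the hypotheses (positivity, monotonicity, convexity, and \eqref{f condi}) is precisely what is needed. Your remark on $C^{2}$ regularity near $s=0$ is a nice extra precaution that the paper does not spell out.
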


A direct consequence of  (\ref{W2}) and (\ref{W0}) is the following isoperimetric type result:
\begin{coro}
Among the class of static convex hypersurfaces with fixed quermassintegrals $W_l(\Omega)$ or fixed weighted volume $\int_{\Omega}\lambda'(r) dv$, where $f$ is a convex and non-decreasing positive function satisfying \eqref{f condi},
the minimum of weighted curvature integral $\int_{\Sigma}f(\Phi) H_k d\mu$ $(k\geq l)$ is achieved by and only by the geodesic spheres centered at the origin.
\end{coro}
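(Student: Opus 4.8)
The plan is to deduce the corollary directly from Theorem~\ref{thm1.1}, since the inequalities \eqref{W2} and \eqref{W0} are already sharp in exactly the form required: in each one the right-hand side depends only on the quantity held fixed, and equality is characterized precisely by origin-centered geodesic spheres. Thus the corollary is essentially a repackaging of the theorem, and I would split the argument according to the two prescribed constraints.

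First I would treat the constraint of fixed quermassintegral. Suppose every competitor $\Omega$ is a static convex domain with $W_l(\Omega)=W_l^{0}$ held fixed. Then \eqref{W2} gives
\begin{equation*}
\int_{\Sigma} f(\Phi) H_k\, d\mu \ge \chi_k\circ f_l^{-1}(W_l^{0}),
\end{equation*}
whose right-hand side is a constant independent of $\Omega$. Since $f_l(r)=W_l(B_r)$ is strictly increasing, I would set $r_0=f_l^{-1}(W_l^{0})$ and observe that the geodesic ball $B_{r_0}$ is itself static convex and satisfies $W_l(B_{r_0})=W_l^{0}$, so it lies in the competing class; for it the weighted integral equals $\chi_k(r_0)=\chi_k\circ f_l^{-1}(W_l^{0})$, saturating the bound. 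Hence the infimum over the class is attained, and by the equality clause of Theorem~\ref{thm1.1} it is attained by and only by origin-centered geodesic spheres.

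The case of fixed weighted volume $\int_{\Omega}\lambda'(r)\,dv=V^{0}$ is entirely analogous, now invoking \eqref{W0}: the bound $\chi_k\circ h^{-1}(V^{0})$ is again constant over the class, and the geodesic ball $B_{r_1}$ with $r_1=h^{-1}(V^{0})$ realizes the constraint and saturates the inequality, so it is the sharp minimizer, again unique among static convex domains up to being an origin-centered sphere.

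I do not expect any genuine obstacle, since the argument is purely a consequence of Theorem~\ref{thm1.1}. The only points I would check carefully are the elementary facts that origin-centered geodesic spheres are static convex (for a geodesic sphere of radius $r$ one has $\kappa_i=\coth r$ while $u/\lambda'(r)=\tanh r$, and $\coth r>\tanh r$ always holds) and that $f_l$ and $h$ are surjective onto the relevant ranges, so that a geodesic sphere realizing each prescribed constraint value genuinely exists. The uniqueness of the minimizer is then inherited verbatim from the equality characterization already established in Theorem~\ref{thm1.1}.
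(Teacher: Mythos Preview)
Your proposal is correct and is exactly the approach the paper has in mind: the corollary is stated immediately after Theorem~\ref{thm1.1} as ``a direct consequence of (\ref{W2}) and (\ref{W0})'', with no further proof given. Your added verifications that origin-centered spheres are static convex and that $f_l,h$ are strictly increasing (hence invertible onto the relevant ranges) are the only details one might want to supply, and you have handled them correctly.
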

The proof of Theorem $1.1$ employs  the following locally constrained inverse curvature flow (first studied by Scheuer and Xia \cite{SX}):
\begin{equation}\label{flow1}
\frac{\partial}{\partial t} X=\bigg(\frac {H_{k-1}}{H_k}-\frac{u}{\lambda'(r)}\bigg)\nu,
\end{equation}
where $\lambda'(r)=\cosh r$, $\nu$  is the unit outward normal,
and $u=\langle\lambda (r)\partial r, \nu\rangle$ is the support function of the hypersurface $\Sigma$.
It was shown in \cite[Theorem 1.6]{HL} that if the initial hypersurface is static convex,  the solution  $\Sigma_t$ remains static convex for all $t>0.$ Moreover, $\Sigma_t$ converges to a geodesic  sphere centered at the origin as $t\rightarrow \infty.$
A pivotal step in our argument involves proving that the weighted curvature integral
\begin{equation}\label{func}
\int_{\Sigma} f(\Phi) H_k d\mu,
\end{equation}
is monotone non-increasing along the  flow (\ref{flow1}) (see Theorem 4.1 below).
On the other hand, the quermassintegrals $W_l(\Omega)$ for $(l\leq k)$ and weighted volume $\int_{\Omega}\lambda'(r) dv$ are non-decreasing under the flow (\ref{flow1}) due to the variation formula \eqref{vari} and the Heintze-Karcher type inequality.
These monotonicity properties together imply the geometric inequalities in Theorem 1.1.

Since $\Phi=\lambda'(r)-1$ in hyperbolic space, the inequality \eqref{Wei2} improves  the inequality \eqref{hlw}, as noted in the Remark 1.4 of \cite{WZ}. Additionally, we emphasize that Theorem 1.1 does not include the case $k=1$.
In the second part of this paper, we  establish the following weighted Minkowski-type inequalites for static convex hypersurfaces in hyperbolic space $\H^n$ and for convex hypersurfaces in the sphere $\S^n$.
\begin{theo}\label{MH}
Suppose that $\Sigma$ is a smooth static convex hypersurface in $\H^n$ enclosing a bounded domain $\Omega$ that contains the origin. Then for any non-decreasing convex $C^2$ positive function $f$ which satisfies
\begin{equation}
\left(\frac{f(s)}{s}\right)'\geq 0,
\end{equation}
we have
\begin{equation}\label{W3}
\int_{\Sigma} f(\Phi) H_1 d\mu-\int_{\Omega}f(\Phi)dv\geq \chi\circ \xi^{-1} (|\Sigma|),
\end{equation}
and
\begin{equation}\label{W3'}
\int_{\Sigma} f(\Phi) H_1 d\mu-\int_{\Omega}f(\Phi)dv\geq \chi\circ h^{-1} \left(\int_{\Omega}\lambda'(r) \, dv\right),
\end{equation}
where $\chi(r)=\int_{\p B_r}f(\Phi)H_1 d\mu-\int_{B_r}f(\Phi)dv$,  $\xi(r)=\omega_{n-1}\lambda^{n-1}(r)=\omega_{n-1}(\sinh r)^{n-1}$, and $h(r)=\int_{B_r}\lambda'(r) dv=\frac 1 n\omega_{n-1}(\sinh r)^{n}$.
Moreover, equality in (\ref{W3}) (\eqref{W3'} resp.) holds if and
only if $\Omega$ is a geodesic ball centered at the origin.
\end{theo}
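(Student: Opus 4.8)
The plan is to prove \eqref{W3} and \eqref{W3'} simultaneously by evolving $\Sigma=\Sigma_0$ under the locally constrained inverse curvature flow \eqref{flow1} in the case $k=1$, that is $\partial_t X=F\nu$ with $F:=\frac{1}{H_1}-\frac{u}{\lambda'}$. By the results quoted after \eqref{flow1} (\cite{SX,HL}), static convexity is preserved along the flow and $\Sigma_t$ converges smoothly to a geodesic sphere $\partial B_{r_\infty}$ centered at the origin. Writing $Q(t)=\int_{\Sigma_t}f(\Phi)H_1\,d\mu-\int_{\Omega_t}f(\Phi)\,dv$, I will establish three facts: (i) $Q$ is non-increasing; (ii) $|\Sigma_t|$ and $\int_{\Omega_t}\lambda'\,dv$ are non-decreasing; and (iii) the function $\chi$ is strictly increasing. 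Granting these, $Q(\Sigma)\ge Q(\partial B_{r_\infty})=\chi(r_\infty)$, while $\xi(r_\infty)=|\partial B_{r_\infty}|\ge|\Sigma|$ and $h(r_\infty)\ge\int_{\Omega}\lambda'\,dv$ give $r_\infty\ge\xi^{-1}(|\Sigma|)$ and $r_\infty\ge h^{-1}\!\big(\int_\Omega\lambda'\,dv\big)$; the monotonicity of $\chi$ then yields both \eqref{W3} and \eqref{W3'}.

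Facts (ii) and (iii) are direct. Since $\lambda'=\Phi+1$ we have $\nabla_\Sigma\lambda'=\nabla_\Sigma\Phi$, and from $\bar\nabla^2\Phi=\lambda'\bar g$ one obtains $\Delta_\Sigma\Phi=(n-1)(\lambda'-uH_1)$ together with $|\nabla_\Sigma\Phi|^2=\lambda^2-u^2$. Hence, integrating by parts,
\begin{equation*}
\frac{d}{dt}|\Sigma_t|=(n-1)\!\int_{\Sigma_t}\!\Big(1-\frac{uH_1}{\lambda'}\Big)d\mu=\int_{\Sigma_t}\frac{\Delta_\Sigma\Phi}{\lambda'}\,d\mu=\int_{\Sigma_t}\frac{|\nabla_\Sigma\Phi|^2}{(\lambda')^2}\,d\mu=\int_{\Sigma_t}\frac{\lambda^2-u^2}{(\lambda')^2}\,d\mu\ge0,
\end{equation*}
with equality only on geodesic spheres (where $u=\lambda$). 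Similarly $\frac{d}{dt}\int_{\Omega_t}\lambda'\,dv=\int_{\Sigma_t}\big(\frac{\lambda'}{H_1}-u\big)d\mu\ge0$ by the Heintze--Karcher inequality in $\H^n$, again with equality only on geodesic spheres; these two equality cases yield the equality statement of Theorem \ref{MH}. A direct computation on $\partial B_r$ gives $\chi'(r)=\omega_{n-1}\cosh r\,(\sinh r)^{n-3}\big[f'(\Phi)\sinh^2 r+(n-2)f(\Phi)\cosh r\big]>0$.

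The crux is (i). Differentiating $Q$ and using $\partial_t(f(\Phi))=f'(\Phi)Fu$, $\partial_t(d\mu)=(n-1)H_1F\,d\mu$ and $\partial_t H_1=\frac{1}{n-1}\big(-\Delta_\Sigma F-(|A|^2-(n-1))F\big)$, then integrating $-\Delta_\Sigma F$ by parts against $f(\Phi)$ and inserting $\Delta_\Sigma f(\Phi)=f''(\Phi)(\lambda^2-u^2)+(n-1)f'(\Phi)(\lambda'-uH_1)$, the term $\int_{\Sigma_t}f(\Phi)F\,d\mu$ appearing in $\frac{d}{dt}\int_{\Sigma_t}f(\Phi)H_1\,d\mu$ cancels precisely against $\frac{d}{dt}\int_{\Omega_t}f(\Phi)\,dv$ (this is exactly why the bulk term is subtracted), leaving
\begin{equation*}
\frac{dQ}{dt}=\int_{\Sigma_t}F\Big[f'(\Phi)(2uH_1-\lambda')-\frac{f''(\Phi)}{n-1}(\lambda^2-u^2)+(n-2)f(\Phi)H_2\Big]d\mu.
\end{equation*}
The integrand is not pointwise signed, since $F$ vanishes only on spheres and changes sign in general, so rather than argue pointwise I will recast the integral via the weighted Minkowski identities of $\H^n$. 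These come from the divergence-free Newton tensors $T_j$: the divergence $\mathrm{div}_\Sigma\big(f(\Phi)\,T_j\nabla_\Sigma\Phi\big)=f(\Phi)\big[(n-1-j)\lambda'\sigma_j-(j+1)u\sigma_{j+1}\big]+f'(\Phi)\,T_j(\nabla_\Sigma\Phi,\nabla_\Sigma\Phi)$ integrates to zero, so the cases $j=0,1$ relate $\int f(\Phi)\lambda' H_j\,d\mu$, $\int f(\Phi)uH_{j+1}\,d\mu$ and the gradient terms $\int f'(\Phi)\,T_j(\nabla_\Sigma\Phi,\nabla_\Sigma\Phi)\,d\mu$. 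Feeding these identities back---together with the Newton--MacLaurin inequalities, the static-convexity bound $\kappa_i\ge u/\lambda'$ (which makes each $T_j$ positive semidefinite, so the gradient terms are nonnegative), the relations $\lambda'=\Phi+1$ and $\lambda^2=\Phi(\Phi+2)$, and the hypothesis $\big(f(s)/s\big)'\ge0$ in the form $\Phi f'(\Phi)\ge f(\Phi)$ with $f''\ge0$---I expect to reduce $\frac{dQ}{dt}$ to a manifestly non-positive expression. Balancing the $f''$-term against the $H_2$-term through the $j=1$ weighted Minkowski identity is the main difficulty, and it is precisely here that the weight hypothesis $(f(s)/s)'\ge0$ of Theorem \ref{MH} is forced.
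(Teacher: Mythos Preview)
Your overall strategy is exactly the paper's: run the $k=1$ locally constrained flow, show $Q(t)$ is non-increasing, show $|\Sigma_t|$ and $\int_{\Omega_t}\lambda'\,dv$ are non-decreasing, and pass to the limiting sphere. Your arguments for (ii) and (iii) are correct and match the paper's.

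The gap is (i). You arrive at the correct evolution formula for $Q$, but ``I expect to reduce $\frac{dQ}{dt}$ to a manifestly non-positive expression'' is not a proof, and your diagnosis that the main difficulty is \emph{balancing the $f''$-term against the $H_2$-term} is misdirected: in the paper these two pieces are controlled \emph{independently}. The paper's key algebraic move (Theorem~5.1) is to split the factor $\big(2f'(\Phi)uH_1-f'(\Phi)\lambda'\big)F$ into a perfect square
\[
III=-\int_{\Sigma_t} f'(\Phi)\,\lambda' H_1\Big(\tfrac{1}{H_1}-\tfrac{u}{\lambda'}\Big)^{2}d\mu\le 0
\]
plus $II=\int_{\Sigma_t} f'(\Phi)\tfrac{u}{\lambda'}(\lambda'-uH_1)\,d\mu$. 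The $H_2$-term $I=(n-2)\int \tfrac{f(\Phi)}{\lambda'}\big(\lambda'\tfrac{H_2}{H_1}-uH_2\big)d\mu$ is, after Newton--MacLaurin and the $j=1$ divergence identity with integration by parts, bounded by a $T_1$-quadratic form in $\nabla\Phi$ that is $\le 0$ exactly because $(f(s)/s)'\ge 0$; no interaction with $f''$ is needed. For $II$ one uses the $j=0$ identity and integrates by parts; static convexity $h_i^s\lambda'-u\delta_i^s>0$ (not merely $T_j\ge 0$) is used here to discard a term, yielding $II\le -\frac{1}{n-1}\int f''(\Phi)\tfrac{u}{\lambda'}|\nabla\Phi|^2\,d\mu$, and then $II+IV\le -\frac{1}{n-1}\int \tfrac{f''(\Phi)}{H_1}|\nabla\Phi|^2\,d\mu\le 0$ by $f''\ge 0$. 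So the three hypotheses on $f$ enter in three disjoint places: $f'\ge 0$ for $III$, $(f/s)'\ge 0$ for $I$, and $f''\ge 0$ for $II+IV$. Without the perfect-square extraction $III$ and the separate handling of $I$ versus $II+IV$, the plan you outline does not close.
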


\begin{rema}
A similar inequality was proved in \cite[Theorem 1.4]{KW2} for h-convex hypersurfaces $\Sigma$ in $\H^n$ enclosing a bounded domain $\Omega$ that contains the origin.
 Specifically,  for any non-decreasing convex $C^2$ positive function $f$, it states:
\begin{equation}\label{KW2}
\int_{\Sigma} f(\Phi) H_1 d\mu-\int_{\Omega}\left(f(\Phi)-f'(\Phi)\lambda'\right)dv\geq \tilde\chi\circ \xi^{-1} (|\Sigma|),
\end{equation}
where $\tilde\chi(r)=\int_{\p B_r}f(\Phi)H_1 d\mu-\int_{B_r}\left(f(\Phi)-f'(\Phi)\lambda'\right)dv$, and  $\xi(r)=\omega_{n-1}\lambda^{n-1}(r)=\omega_{n-1}(\sinh r)^{n-1}$.
The left hand side of \eqref{KW2}  is greater than or equal to that of \eqref{W3} as the term $\int_{\Omega}f'(\Phi)\lambda' dv$ is non-negative. Moreover,  \eqref{W3} holds for static convex hypersurfaces,  a broader class than h-convex ones.
Therefore, our inequality \eqref{W3} represents an improvement over \eqref{KW2}.
\end{rema}
\begin{theo}\label{MS}
Suppose that $\Sigma$ is  a strictly convex hypersurface in $\S^n$ enclosing a bounded domain $\Omega$ that contains the origin. Then for any non-decreasing convex $C^2$ positive funciton $f$,
we have
\begin{equation}\label{W4}
\int_{\Sigma} f(\Phi) H_1 d\mu+\int_{\Omega} f(\Phi)dv\geq \chi\circ h^{-1} \left(\int_{\Omega}\lambda'(r)\, dv\right),
\end{equation}
where  $\chi(r)=\int_{\p B_r}f(\Phi)H_1 d\mu+\int_{B_r}f(\Phi)dv$, and  $h(r)=\int_{B_r}\lambda'(r)  dv=\frac 1 n\omega_{n-1}(\sin r)^{n}$.
Moreover, equality in (\ref{W4}) holds if and
only if $\Omega$ is a geodesic ball centered at the origin.
\end{theo}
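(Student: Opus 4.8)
The plan is to deform $\Sigma$ by the $k=1$ locally constrained inverse curvature flow \eqref{flow1} in $\S^n$,
\[
\frac{\partial}{\partial t}X=\Big(\frac{1}{H_1}-\frac{u}{\lambda'(r)}\Big)\nu,\qquad \lambda'(r)=\cos r,
\]
starting from the given strictly convex $\Sigma=\Sigma_0$. A closed strictly convex hypersurface of $\S^n$ lies in an open hemisphere, so $r<\pi/2$ and hence $\lambda'>0$; since $\Omega$ contains the origin and $\Sigma_t$ stays convex, the support function satisfies $u>0$ throughout. First I would record long-time existence, preservation of convexity, and smooth convergence of $\Sigma_t$ to a geodesic sphere centered at the origin (the spherical counterpart of the convergence used for \eqref{flow1}). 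The inequality \eqref{W4} then follows, exactly as in Theorem~\ref{thm1.1}, from two monotonicities: the weighted volume $\int_{\Omega_t}\lambda'\,dv$ is non-decreasing, while $\mathcal F(t):=\int_{\Sigma_t}f(\Phi)H_1\,d\mu+\int_{\Omega_t}f(\Phi)\,dv$ is non-increasing.

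For the weighted volume, the first variation gives $\tfrac{d}{dt}\int_{\Omega_t}\lambda'\,dv=\int_{\Sigma_t}\lambda'\big(\tfrac{1}{H_1}-\tfrac{u}{\lambda'}\big)\,d\mu$. Since $\bar\nabla^2\Phi=\lambda'\bar g$, the divergence theorem yields $\int_{\Sigma_t}u\,d\mu=\int_{\Omega_t}\bar\Delta\Phi\,dv=n\int_{\Omega_t}\lambda'\,dv$, so this derivative equals $\int_{\Sigma_t}\tfrac{\lambda'}{H_1}\,d\mu-n\int_{\Omega_t}\lambda'\,dv\ge 0$ by the Heintze--Karcher inequality in $\S^n$ (applicable because $\Sigma_t$ is convex and lies in a hemisphere), with equality only on centered geodesic spheres.

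The core of the argument is the monotonicity of $\mathcal F$. Using $\partial_t(d\mu)=(n-1)H_1F\,d\mu$, $\partial_t\Phi=uF$, the evolution $\partial_tH=-\Delta F-(|A|^2+(n-1))F$ valid in $\S^n$ (where $|A|^2=\tr(h^2)$), and the identity $\nabla^{\Sigma}_i\nabla^{\Sigma}_j\Phi=\lambda'g_{ij}-uh_{ij}$ (so that $\Delta_{\Sigma}f(\Phi)=f''(\Phi)(\lambda^2-u^2)+f'(\Phi)((n-1)\lambda'-uH)$), I integrate by parts. The ambient-curvature term generated by $\partial_tH$ is cancelled precisely by the bulk contribution $\tfrac{d}{dt}\int_{\Omega_t}f(\Phi)\,dv=\int_{\Sigma_t}f(\Phi)F\,d\mu$; this is exactly why the correct companion weight in $\S^n$ is $+\int_\Omega f(\Phi)\,dv$, carrying the opposite sign to the hyperbolic case of Theorem~\ref{MH}. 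After the cancellations the derivative collapses to
\[
\frac{d}{dt}\mathcal F=\int_{\Sigma_t}F\Big[(n-2)f(\Phi)H_2+2f'(\Phi)uH_1-f'(\Phi)\lambda'-\tfrac{1}{n-1}f''(\Phi)(\lambda^2-u^2)\Big]d\mu,
\]
where I have used $(n-1)H_1^2-\tfrac{|A|^2}{n-1}=(n-2)H_2$.

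The hard part will be showing this integral is non-positive. Every term carries the factor $F=\tfrac{1}{H_1}-\tfrac{u}{\lambda'}$, which vanishes identically on centered geodesic spheres, so the sign cannot be read off pointwise and must be extracted from the integral. The tools I would use are the weighted Minkowski formula obtained by integrating $\mathrm{div}_{\Sigma}\big(w(\Phi)\nabla^{\Sigma}\Phi\big)$ over the closed $\Sigma_t$ and using that the Newton tensor $T_0=I$ is divergence free, namely $\int_{\Sigma_t}w(\Phi)(\lambda'-uH_1)\,d\mu=-\tfrac{1}{n-1}\int_{\Sigma_t}w'(\Phi)(\lambda^2-u^2)\,d\mu$ for any $C^1$ weight $w$, together with the Newton--MacLaurin inequality $H_2\le H_1^2$ and the hypotheses $f,f',f''\ge 0$. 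It is the favorable sign of the ambient curvature ($\varepsilon=+1$) in $\S^n$ that should let these close the estimate using only the convexity and monotonicity of $f$, with no extra structural condition of the type required in the hyperbolic Theorem~\ref{MH}; equality forces $F\equiv0$, i.e.\ $\Sigma$ a centered geodesic sphere. Finally, writing $r_\infty$ for the radius of the limiting sphere, the two monotonicities give $\mathcal F(0)\ge\mathcal F(\infty)=\chi(r_\infty)$ and $\int_\Omega\lambda'\,dv\le h(r_\infty)$; since $\chi$ and $h$ are strictly increasing, $\mathcal F(0)\ge\chi\circ h^{-1}\big(\int_\Omega\lambda'\,dv\big)$, which is \eqref{W4}, with equality exactly for centered geodesic spheres.
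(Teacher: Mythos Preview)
Your overall strategy matches the paper's exactly: run the $k=1$ locally constrained flow \eqref{flow1} in $\S^n$, show $\int_{\Omega_t}\lambda'\,dv$ is non-decreasing via Heintze--Karcher, show $\mathcal F(t)$ is non-increasing, and pass to the spherical limit. Your variational identity for $\mathcal F$ also agrees with Proposition~\ref{prop1}.

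The gap is in the step you correctly flag as ``the hard part'': the tools you list are not enough to force the sign. The paper proceeds by a specific decomposition of the integrand into four pieces. From the $f'$ terms one first extracts the perfect square $III=-\int f'(\Phi)\lambda' H_1 F^2\le 0$; what remains is $II=\int f'(\Phi)\tfrac{u}{\lambda'}(\lambda'-uH_1)$, and after your $T_0$ integration by parts together with $\nabla_i(u/\lambda')=(\lambda')^{-2}(h_i^s\lambda'+u\delta_i^s)\nabla_s\Phi$ (the $+u$ sign is what makes strict convexity suffice in $\S^n$), one gets $II+IV\le -\tfrac{1}{n-1}\int f''(\Phi)H_1^{-1}|\nabla\Phi|^2\le 0$. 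The term your toolkit does not reach is $I=(n-2)\int f(\Phi)H_2 F$. Since $F$ has no sign, Newton--MacLaurin cannot be applied as $H_2\le H_1^2$ against $F$; instead one writes $I=(n-2)\int\tfrac{f}{\lambda'}\big(\lambda'\tfrac{H_2}{H_1}-uH_2\big)$, uses $\tfrac{H_2}{H_1}\le H_1$ against the nonnegative factor $(n-2)f/\lambda'$, and then needs the \emph{$T_1$} divergence identity $\lambda'H_1-uH_2=\tfrac{1}{2C_{n-1}^2}\nabla_i(T_1^{ij}\nabla_j\Phi)$ and an integration by parts (with $\nabla_i\lambda'=-\nabla_i\Phi$ in $\S^n$) to conclude $I\le 0$. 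Your proposal invokes only the $T_0$ Minkowski formula, which handles $II$ but not $I$. A minor point on rigidity: equality in the monotonicity is detected by $\nabla\Phi\equiv 0$ (coming from the strictness in $I$), not by $F\equiv 0$.
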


The inequalities \eqref{W3}, \eqref{W3'} and \eqref{W4} are proved using the locally constraint flow (\ref{flow1}) for $k=1$ in the hyperbolic space ($\v=-1$) and the sphere ($\v=1$), respectively. A key ingredient in the proof  is  the monotonicity of
$$\int_{\Sigma}f(\Phi)H_1d\mu+\v\int_{\Omega}f(\Phi)dv$$ along the flow. By the work in \cite{SX}, it was proved that for strictly convex hypersurfaces in the sphere $\S^n$, the flow
  (\ref{flow1}) preserves strict convexity and converges to a geodesic sphere as  $t\rightarrow \infty.$

The rest of this paper is organized as follows. In Section $2$, we review some basic facts about the $k$-th normalized elementary symmetric function $H_{k}$ and the geometry of hypersurfaces in space forms $N^n(\v)$. In Section $3$, we establish a general variational equation for the curvature integral$\int_{\Sigma}f(\Phi)H_k d\mu$ within the framework of space forms $N^n(\v)$. In Section $4$, we provide a detailed proof of our first main result,  Theorem $1.1$. A key step toward this result is establishing the monotonicity of the weighted curvature integral $\int_{\Sigma}f(\Phi)H_k d\mu$ along the flow \eqref{flow1}.
 Finally, Section $5$ is devoted to proving weighted Minkowski type inequalities in hyperbolic space $\H^n$ and the sphere $\S^n$, specifically addressing Theorem 1.4 and Theorem 1.6.

\section{Preliminaries}
In this section, we first recall some basic definitions and properties of elementary symmetric functions.

Let $\s_k$ be the $k$-th elementary symmetry function $\s_k:\R^{n-1}\to \R$ given by
\[\s_k(\kappa)=\sum_{1\leq i_1<\cdots<i_{k}\leq n-1}\kappa_{i_1}\cdots\kappa_{i_k}\quad  \hbox{ for } \kappa=(\kappa_1, \cdots,\kappa_{n-1})\in \R^{n-1}.\]
For a symmetric matrix $B$, we set
\[
\s_k(B):=\s_k(\kappa(B)),
\]
where $\kappa(B)=(\kappa_1(B),\cdots,\kappa_{n-1}(B))$ is the set of eigenvalues of $B$.
The $k$-th Newton transformation $T_k$ is defined by
$$
\left(T_k\right)_j^i(B)=\frac{\partial \sigma_{k+1}}{\partial B_i^j}(B),
$$
where $B=(B)_i^j$.
We recall the basic formulae about $\s_k$ and $T_k$ (see \cite{Guan, Reilly} for instance):
\begin{eqnarray}\label{sigmak}
\s_k(B)& =&\ds\frac{1}{k!}\d^{i_1\cdots i_k}
_{j_1\cdots j_k}B_{i_1}^{j_1}\cdots
{B_{i_k}^ {j_k}},
\\
(T_k)^{i}_j(B) & =& \ds \frac{1}{k!}\d^{ii_1\cdots i_{k}}
_{j j_1\cdots j_{k}}B_{i_1}^{j_1}\cdots
{B_{i_{k}}^{j_{k}}}\label{Tk},
\end{eqnarray}
where $\d^{i_1i_2\cdots i_{k}}
_{j_1j_2\cdots j_{k}}$ is the generalized Kronecker delta given by
\begin{equation}\label{generaldelta}
 \d^{i_1i_2 \cdots i_k}_{j_1j_2 \cdots j_k}=\det\left(
\begin{array}{cccc}
\d^{i_1}_{j_1} & \d^{i_2}_{j_1} &\cdots &  \d^{i_k}_{j_1}\\
\d^{i_1}_{j_2} & \d^{i_2}_{j_2} &\cdots &  \d^{i_k}_{j_2}\\
\vdots & \vdots & \vdots & \vdots \\
\d^{i_1}_{j_k} & \d^{i_2}_{j_k} &\cdots &  \d^{i_k}_{j_k}
\end{array}
\right).
\end{equation}
The $k$-th G\r{a}rding cone $\Gamma_k^{+}$ is defined by
\begin{equation}\label{Garding}
	\Gamma_k^{+}=\left\{\kappa \in \mathbb{R}^{n-1}: \sigma_j(\kappa)>0,\;\forall\, 1\leq j\leq k\right\} .
\end{equation}
 A symmetric matrix $B$ is said to belong to $\Gamma_k^{+}$ if its eigenvalues $\kappa(B) \in \Gamma_k^{+}$. Let $H_k$ be the normalized $k$-th elementary symmetry function given by
\begin{equation}
	H_k= \frac{\sigma_k}{C_{n-1}^k}.
\end{equation}
We adopt the usual convention $\s_0=H_0=1$ and $\s_k=H_k=0$ for $k\geq n$. We have the following Newton-Maclaurin inequalities.

\begin{lemm} \label{lem} For any integers $1\leq l\leq k\leq n-1$ and $\kappa\in \Gamma_k^+ $,  we have
\begin{equation}\label{N-M}
H_{l}H_{k}\geq H_{l-1} H_{k+1}.
\end{equation}
Equality holds in (\ref{N-M}) at $\kappa$ if and only if $\kappa=c(1,1,\cdots,1)$ for some constant $c>0$.
\end{lemm}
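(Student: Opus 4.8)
The plan is to deduce the general inequality (\ref{N-M}) from the basic Newton inequality $H_j^2\ge H_{j-1}H_{j+1}$ together with a telescoping (chaining) argument, handling the sign of $H_{k+1}$ as a separate case.

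First I would establish the basic Newton inequality by real-rootedness. Since $\kappa=(\kappa_1,\dots,\kappa_{n-1})$ is the eigenvalue vector of a symmetric matrix, all $\kappa_i$ are real, so the polynomial
\[
P(x)=\prod_{i=1}^{n-1}(x+\kappa_i)=\sum_{j=0}^{n-1}\binom{n-1}{j}H_j(\kappa)\,x^{n-1-j}
\]
has only real roots. Applying the classical argument of repeated differentiation and reversal—each step preserving real-rootedness by Rolle's theorem—one reduces to a quadratic whose nonnegative discriminant yields $H_j^2\ge H_{j-1}H_{j+1}$ for every $1\le j\le n-2$, valid for all real $\kappa$ (no positivity needed here). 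I record for later use that equality in a given $H_j^2=H_{j-1}H_{j+1}$ forces the reduced quadratic to have a double root, which propagates back through the Rolle steps to force all $\kappa_i$ equal.

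Next comes the telescoping step, where I would exploit the cone condition. On $\Gamma_k^+$ we have $H_0,\dots,H_k>0$. If $H_{k+1}\le 0$, then the left-hand side $H_lH_k$ of (\ref{N-M}) is strictly positive while the right-hand side $H_{l-1}H_{k+1}\le 0$, so the inequality holds (strictly). If instead $H_{k+1}>0$, then $\kappa\in\Gamma_{k+1}^+$ and all of $H_0,\dots,H_{k+1}$ are positive; setting $q_j=H_j/H_{j-1}>0$, the basic Newton inequality rearranges to $q_{j+1}\le q_j$, so $j\mapsto q_j$ is non-increasing. Since $k\ge l$ gives $k+1\ge l$, I obtain $q_{k+1}\le q_l$, that is $H_{k+1}/H_k\le H_l/H_{l-1}$, which is exactly (\ref{N-M}) after clearing denominators.

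For the equality characterization, equality in (\ref{N-M}) forces $H_{k+1}>0$ (otherwise the inequality is strict) and then forces $q_l=q_{l+1}=\cdots=q_{k+1}$, hence equality in at least one basic Newton inequality $H_j^2=H_{j-1}H_{j+1}$; by the remark above this makes all $\kappa_i$ equal, and positivity on $\Gamma_k^+$ gives the common value $c>0$, so $\kappa=c(1,\dots,1)$. The converse is the direct check $H_j=c^j$, under which both sides of (\ref{N-M}) equal $c^{\,l+k}$. The main obstacle I anticipate is not the inequality itself but the careful sign bookkeeping on $\Gamma_k^+$ (where $H_{k+1}$ need not be positive) together with tracing the equality case back through the differentiation/reversal steps to conclude that all $\kappa_i$ coincide.
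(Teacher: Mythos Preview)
The paper does not provide its own proof of this lemma; it is recorded as the classical Newton--Maclaurin inequality and invoked as a known tool throughout. Your proposal supplies exactly the standard textbook argument: real-rootedness of $P(x)=\prod_{i}(x+\kappa_i)$ gives the basic inequality $H_j^2\ge H_{j-1}H_{j+1}$ via repeated differentiation and reversal down to a quadratic, and then the telescoping of the ratios $q_j=H_j/H_{j-1}$ on $\Gamma_{k+1}^+$ (after disposing of the case $H_{k+1}\le 0$ by a sign check) yields the general inequality \eqref{N-M}. This is correct and is the classical route.

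One minor remark on the equality case. Your propagation claim---that a double root of the reduced quadratic forces all $\kappa_i$ equal---is right, but it rests on the lemma that if a real-rooted polynomial $Q$ has derivative $Q'$ with \emph{all} roots equal, then $Q$ itself has all roots equal; this follows from the interlacing of roots of $Q$ and $Q'$ and is worth one explicit sentence. You also implicitly use $H_{j+1}>0$ (available since you are in the case $\kappa\in\Gamma_{k+1}^+$ and $l\le j\le k$) to ensure that the reversal step does not drop the degree, which is what rules out the degenerate equality cases such as $\kappa=(c,0,\dots,0)$. Both points are easily filled in, so the argument is sound.
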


Next we collect some well-known facts on geometry of hypersurfaces in  space forms $N^n(\v)$.

Let $\Sigma \subset N^n(\v))$ be a smooth closed hypersurface in $N^n(\v)$ with induced metric $g$ and the second fundamental form $h$.
In a local coordinate system $\{x^1, \cdots x^{n-1}\}$ of $\Sigma$, we have $g_{ij}=g(\partial_i, \partial_j)$ and $h_{ij}=h(\partial_i, \partial_j).$
The principal curvatures $\kappa=(\kappa_1, \cdots, \kappa_{n-1})$  of $\Sigma$ are the eigenvalues of Weingarten matrix $(h_i^j)=(g^{jk}h_{ik})$, where $(g^{ij})$ is the inverse matrix of $(g_{ij})$.  The $k$-th mean curvature of $\Sigma$ is defined by
\begin{equation}\label{add_sigmak}
\s_k=\s_k(h)=\ds\frac{1}{k!}\d^{i_1\cdots i_k}
_{j_1\cdots j_k}h_{i_1}^{j_1}\cdots
{h_{i_k}^ {j_k}}.
\end{equation}

\begin{lemm}[\cite{Guan}]
Denote $$T_{k-1}^{ij} := \;(T_{k-1})_l^i g^{jl}= \;\frac{\partial \sigma_k}{\partial h_{i}^l} g^{jl},$$ then  we have
\begin{align}
&\sum_{ij} T_{k-1}^{ij} h_{ij}= k\sigma_{k},\\
&\sum_{ij} T_{k-1}^{ij} (h^2)_{ij}= \sigma_1\sigma_{k}-(k+1)\sigma_{k+1},\label{e00}
\end{align}
where $(h^2)_{ij}=g^{kl}h_{ik}h_{jl}$ is the squared second fundamental form.
\end{lemm}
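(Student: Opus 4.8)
The plan is to reduce both formulas to intrinsic trace identities for the Newton transformation and then to derive them from the homogeneity of $\sigma_k$ together with the Newton recursion. First I would observe that, after raising an index with the metric, the metric-contracted quantities become ordinary traces: since $g^{jl}h_{ij}=h_i^l$ and $g^{jl}(h^2)_{ij}=(h^2)_i^l$, one has
\begin{equation*}
\sum_{ij}T_{k-1}^{ij}h_{ij}=\sum_{i,l}(T_{k-1})_l^i h_i^l=\mathrm{tr}(T_{k-1}h),\qquad \sum_{ij}T_{k-1}^{ij}(h^2)_{ij}=\mathrm{tr}(T_{k-1}h^2).
\end{equation*}
Thus it suffices to prove $\mathrm{tr}(T_{k-1}h)=k\sigma_k$ and $\mathrm{tr}(T_{k-1}h^2)=\sigma_1\sigma_k-(k+1)\sigma_{k+1}$.

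For the first identity I would use that $\sigma_k(h)$ is a homogeneous polynomial of degree $k$ in the entries $h_i^j$, with $(T_{k-1})_j^i=\partial\sigma_k/\partial h_i^j$ by definition. Euler's homogeneity theorem then gives $\sum_{i,j}h_i^j\,\partial\sigma_k/\partial h_i^j=k\sigma_k$, which is exactly $\mathrm{tr}(T_{k-1}h)=k\sigma_k$. Alternatively, and entirely within the bookkeeping of (\ref{sigmak})--(\ref{Tk}), substituting the defining formula for $(T_{k-1})_j^i$ and contracting against $h_i^j$ produces $\tfrac{1}{(k-1)!}\delta^{i_0 i_1\cdots i_{k-1}}_{j_0 j_1\cdots j_{k-1}}h_{i_0}^{j_0}\cdots h_{i_{k-1}}^{j_{k-1}}$; since all $k$ factors enter symmetrically, comparing the normalizations $1/(k-1)!$ and $1/k!$ yields the factor $k$ and hence $k\sigma_k$.

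For the second identity I would invoke the Newton recursion $T_m=\sigma_m I-T_{m-1}h$ recalled in the introduction (with $T_{m-1}$ commuting with $h$). Taking the trace of $T_m h$ and using $\mathrm{tr}(h)=\sigma_1$ gives
\begin{equation*}
\mathrm{tr}(T_m h)=\sigma_m\sigma_1-\mathrm{tr}(T_{m-1}h^2).
\end{equation*}
By the first identity the left-hand side equals $(m+1)\sigma_{m+1}$, so $\mathrm{tr}(T_{m-1}h^2)=\sigma_1\sigma_m-(m+1)\sigma_{m+1}$; setting $m=k$ reproduces (\ref{e00}).

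The only step beyond routine index manipulation is the recursion $T_m=\sigma_m I-T_{m-1}h$ itself, which is where I expect any difficulty to lie if one insists on a self-contained treatment: it follows from expanding the generalized Kronecker delta $\delta^{i\,i_1\cdots i_m}_{j\,j_1\cdots j_m}$ by cofactors along its first row and column, separating the diagonal term $\delta^i_j\,\delta^{i_1\cdots i_m}_{j_1\cdots j_m}$ from the remaining contractions that assemble into $(T_{m-1})^i_l h_j^l$. Given that this recursion is already available, the remainder is a short computation.
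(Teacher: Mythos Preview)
Your argument is correct and is the standard derivation of these trace identities. Note, however, that the paper does not supply its own proof of this lemma: it is stated with a citation to \cite{Guan} and used as a black box. Your proposal therefore provides precisely the self-contained justification that the paper omits; the reduction to $\mathrm{tr}(T_{k-1}h)$ and $\mathrm{tr}(T_{k-1}h^2)$, the use of Euler homogeneity for the first identity, and the Newton recursion $T_m=\sigma_m I-T_{m-1}h$ for the second are exactly the ingredients one would expect, and your sketch of the recursion via cofactor expansion of the generalized Kronecker delta is also standard.
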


  We present the following formulas for the function $\Phi$  and the support function $u$ which will play a central role in establishing monotonicity properties under the flow (\ref{flow1}).
  \begin{lemm}[\cite{GuanLi2}]
  Let $(\Sigma^{n-1},g)$ be a smooth closed hypersurface in space forms $N^n(\v)$. Then there holds
  \begin{align}\label{nabla}
  &\nabla_i\nabla_j\Phi=\lambda'(r)g_{ij}-uh_{ij},\\
  &\nabla_i(T_{k-1}^{ij} \nabla_j \Phi)=(n-k)\lambda'(r)\sigma_{k-1}-k\sigma_k u=kC_{n-1}^k(\lambda'(r)H_{k-1}-uH_k), \quad k=1,\cdots, n-1 \label{nablak}\\
  & \nabla_i u=h_i^l \nabla_l\Phi,\label{nabla u}
  \end{align}
  where $\nabla$ is the covariant derivative with respect to the metric $g$.
  \end{lemm}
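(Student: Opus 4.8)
The plan is to reduce all three identities to two ambient facts about $\Phi$ in the warped product $\bar g=dr^2+\lambda(r)^2 g_{\S^{n-1}}$: that its ambient gradient is the conformal position field, $\bar\nabla\Phi=\lambda(r)\partial_r$, and that its ambient Hessian is pure trace, $\bar\nabla^2\Phi=\lambda'(r)\bar g$. The first is immediate from $\Phi'(r)=\lambda(r)$ and $\bar\nabla r=\partial_r$. For the second I would compute the Hessian of the radial function $\Phi$ directly: evaluating on $\partial_r$ and on vectors $V,W$ tangent to the sphere factor gives $\bar\nabla^2\Phi(\partial_r,\partial_r)=\Phi''=\lambda'$, $\bar\nabla^2\Phi(\partial_r,V)=0$, and $\bar\nabla^2\Phi(V,W)=\Phi'\tfrac{\lambda'}{\lambda}\bar g(V,W)=\lambda'\bar g(V,W)$, which together assemble into $\bar\nabla^2\Phi=\lambda'\bar g$. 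This is precisely the statement that $\lambda\partial_r$ is a conformal field with conformal factor $\lambda'$, valid because $\lambda''=-\v\lambda$ in a space form. With this in hand, the support function is simply the normal part of the gradient, $u=\langle\lambda\partial_r,\nu\rangle=\langle\bar\nabla\Phi,\nu\rangle$.

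To obtain \eqref{nabla} I would pass from the ambient to the intrinsic Hessian of $\Phi|_\Sigma$ via the Gauss formula $\bar\nabla_{e_i}e_j=\nabla_{e_i}e_j-h_{ij}\nu$. Taking the difference of $\bar\nabla^2\Phi(e_i,e_j)$ and $\nabla_i\nabla_j\Phi$ picks out only the normal part of $\bar\nabla_{e_i}e_j$ acting on $\Phi$, namely $-h_{ij}\langle\nu,\bar\nabla\Phi\rangle=-u\,h_{ij}$; combined with $\bar\nabla^2\Phi(e_i,e_j)=\lambda'g_{ij}$ this yields $\nabla_i\nabla_j\Phi=\lambda'g_{ij}-u h_{ij}$. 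For \eqref{nabla u} I would differentiate $u=\langle\bar\nabla\Phi,\nu\rangle$ tangentially: the term $\langle\bar\nabla_{e_i}\bar\nabla\Phi,\nu\rangle=\bar\nabla^2\Phi(e_i,\nu)=\lambda'\bar g(e_i,\nu)$ vanishes since $e_i\perp\nu$, while the Weingarten relation $\bar\nabla_{e_i}\nu=h_i^l e_l$ converts the other term into $\langle\bar\nabla\Phi,\bar\nabla_{e_i}\nu\rangle=h_i^l\nabla_l\Phi$, giving $\nabla_i u=h_i^l\nabla_l\Phi$.

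Finally, \eqref{nablak} comes from contracting \eqref{nabla} against $T_{k-1}$. The decisive input, and the step I expect to require the most care, is that the Newton tensor is divergence free in $N^n(\v)$, namely $\nabla_i T_{k-1}^{ij}=0$; this rests on the Codazzi equation in a constant-curvature ambient space, $\nabla_i h_{jl}=\nabla_j h_{il}$, so that the curvature correction term normally present in $\mathrm{div}\,T_{k-1}$ drops out. Granting this, $\nabla_i(T_{k-1}^{ij}\nabla_j\Phi)=T_{k-1}^{ij}\nabla_i\nabla_j\Phi=\lambda'\,T_{k-1}^{ij}g_{ij}-u\,T_{k-1}^{ij}h_{ij}$. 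Inserting the standard trace identity $T_{k-1}^{ij}g_{ij}=\mathrm{tr}\,T_{k-1}=(n-k)\sigma_{k-1}$ together with $T_{k-1}^{ij}h_{ij}=k\sigma_k$ from \cite{Guan} produces $(n-k)\lambda'\sigma_{k-1}-k\sigma_k u$, and the elementary identity $(n-k)C_{n-1}^{k-1}=kC_{n-1}^{k}$ rewrites this in the normalized form $kC_{n-1}^k(\lambda'H_{k-1}-uH_k)$, which is \eqref{nablak}.
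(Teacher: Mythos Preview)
Your argument is correct and is the standard derivation of these identities. The paper does not actually supply a proof of this lemma; it cites it from \cite{GuanLi2} and uses the formulas as input, so there is no proof in the paper to compare against. One minor remark: the identity $\bar\nabla^2\Phi=\lambda'\bar g$ that you establish is a warped-product computation that holds for any warping function $\lambda$, so the clause ``valid because $\lambda''=-\v\lambda$ in a space form'' is not needed at that step; the constant-curvature hypothesis enters only later, through Codazzi, to guarantee $\nabla_i T_{k-1}^{ij}=0$. Otherwise your reductions---Gauss formula for \eqref{nabla}, differentiating $u=\langle\bar\nabla\Phi,\nu\rangle$ with the Weingarten map for \eqref{nabla u}, and contracting \eqref{nabla} against the divergence-free Newton tensor together with $\mathrm{tr}\,T_{k-1}=(n-k)\sigma_{k-1}$ and $T_{k-1}^{ij}h_{ij}=k\sigma_k$ for \eqref{nablak}---are exactly the intended ones.
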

Integrating (\ref{nabla}) and using integration by parts, we obtain the well-known Minkowski formulas in space form $N^n(\v)$
\begin{equation}\label{Minkowski}
\int_{\Sigma} u H_{k}d\mu= \int_{\Sigma} \lambda'(r) H_{k-1} d\mu.
\end{equation}

\section{General evolution equations in the space form}
In this section, we derive a general variational equation for the curvature integral$$\int_{\Sigma}f(\Phi)H_k d\mu$$
weighted by an arbitrary smooth function $f(\Phi)$ in space forms $N^n(\v)$.
We begin by recalling  the following standard variational equations.
\begin{lemm}[\cite{GuanLi2}]\label{flow eqn}
Let $\Sigma_t$ be a smooth family of closed hypersurfaces in  space forms $N^n(\v)$ satisfying a general normal flow
\begin{equation}
\frac{\partial}{\partial t}X=F\nu,
\end{equation}
where $\nu$ is the unit outward normal of $\Sigma_t$ and $F$ is a smooth function on $\Sigma_t$. Then we have the following evolution equations:
\begin{align}
&\partial_t g_{ij}=2Fh_{ij},\label{e01}\\
&\partial_t h_{ij}=-\nabla_i\nabla_j F+F\left((h^2)_{ij}-\v\delta_i^j\right),\label{e02}\\
&\partial_t h^{j}_i=-\nabla_i\nabla^j F-F\left((h^2)^{j}_i+\v\delta_i^j\right),\label{e03}\\
&\frac{d}{dt} \int_{\Sigma_t}H_{k} d\mu_t=\int_{\Sigma_t} \left((n-1-k)H_{k+1}-\v k H_{k-1}\right)F d\mu_t,\qquad k=0,\cdots, n-1, \label{variation}
\end{align}
where $g_{ij},  h_{ij}$ and $h_i^j$ denote the induced metric, second fundamental form, and the Weingarten tensor of the evolving hypersurface $\Sigma_t$ respectively,  and $H_k$ is the $k$-th normalized mean curvature of $\Sigma_t$.
\end{lemm}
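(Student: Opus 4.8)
The plan is to derive the four evolution equations in order, extracting each from its predecessors together with the structure equations (Gauss, Codazzi, Weingarten) of a hypersurface in a space of constant curvature $\v$. Write $X=X(\cdot,t)$ for the embedding, $\{\partial_i\}$ for a coordinate frame on $\Sigma_t$, and $\bar\nabla$ for the Levi-Civita connection of $N^n(\v)$, whose Riemann tensor has the constant-curvature form $\bar R(Y,Z)W=\v(\langle Z,W\rangle Y-\langle Y,W\rangle Z)$. First I would establish \eqref{e01} directly: from $g_{ij}=\langle\partial_iX,\partial_jX\rangle$ and $\partial_tX=F\nu$ one computes $\partial_tg_{ij}=2\langle\partial_i(F\nu),\partial_jX\rangle=2F\langle\bar\nabla_{\partial_i}\nu,\partial_jX\rangle$, and the Weingarten relation $\bar\nabla_{\partial_i}\nu=h_i^k\partial_kX$ (so that $\langle\bar\nabla_{\partial_i}\nu,\partial_jX\rangle=h_{ij}$) gives $\partial_tg_{ij}=2Fh_{ij}$. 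Along the way I would record $\partial_t\nu$: since $|\nu|^2=1$ the variation is tangential, and differentiating $\langle\nu,\partial_iX\rangle=0$ yields $\langle\partial_t\nu,\partial_iX\rangle=-\langle\nu,\partial_i(F\nu)\rangle=-\partial_iF$, hence $\partial_t\nu=-\nabla F$, the gradient of $F$ on $\Sigma_t$.

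For \eqref{e02} I would differentiate $h_{ij}=-\langle\bar\nabla_{\partial_i}\partial_jX,\nu\rangle$ in $t$ and commute $\partial_t$ past the spatial derivatives. This is the genuinely delicate step: the commutation produces three contributions, namely the Hessian $-\nabla_i\nabla_jF$ from differentiating the connection coefficients, a term $F(h^2)_{ij}$ from the square of the shape operator, and the curvature contribution arising from $\bar R(\partial_tX,\partial_i)\partial_j$ paired against $\nu$, which in a space form collapses to a multiple of $g_{ij}$ and delivers $-\v F g_{ij}$. The result is \eqref{e02} (with $g_{ij}$ in place of $\delta_i^j$ in this lower-index form). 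Then \eqref{e03} follows purely algebraically by raising an index: differentiating $h_i^j=g^{jk}h_{ik}$ and inserting $\partial_tg^{jk}=-g^{ja}g^{kb}\partial_tg_{ab}=-2F(h^2)^{jk}$ from \eqref{e01}, the two $(h^2)$ contributions combine as $-2F(h^2)_i^j+F(h^2)_i^j=-F(h^2)_i^j$, giving $\partial_th_i^j=-\nabla_i\nabla^jF-F((h^2)_i^j+\v\delta_i^j)$.

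For the integral formula \eqref{variation} I would write $\partial_t(H_kd\mu_t)=(\partial_tH_k)d\mu_t+H_k\,\partial_td\mu_t$. The measure evolves by $\partial_td\mu_t=\tfrac12 g^{ij}(\partial_tg_{ij})\,d\mu_t=F\sigma_1\,d\mu_t$, while $\partial_t\sigma_k=(T_{k-1})_j^i\,\partial_th_i^j$ with $(T_{k-1})_j^i=\partial\sigma_k/\partial h_i^j$. Substituting \eqref{e03} and applying the trace identities from Lemma~\ref{flow eqn}'s predecessor (that is, $T_{k-1}^{ij}(h^2)_{ij}=\sigma_1\sigma_k-(k+1)\sigma_{k+1}$ together with $\tr T_{k-1}=(n-k)\sigma_{k-1}$) yields $\partial_t\sigma_k=-(T_{k-1})_j^i\nabla_i\nabla^jF-F(\sigma_1\sigma_k-(k+1)\sigma_{k+1})-\v F(n-k)\sigma_{k-1}$. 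The Hessian term is a pure divergence, $(T_{k-1})_j^i\nabla_i\nabla^jF=\div(T_{k-1}\nabla F)$, because the Newton tensor is divergence-free in a space form (a consequence of Codazzi), so it integrates to zero over the closed $\Sigma_t$. After the measure term cancels $-F\sigma_1\sigma_k$, what survives is $\frac{d}{dt}\int_{\Sigma_t}\sigma_k\,d\mu=\int_{\Sigma_t}\big((k+1)\sigma_{k+1}-\v(n-k)\sigma_{k-1}\big)F\,d\mu$.

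Finally I would normalize using $\sigma_k=C_{n-1}^kH_k$ and the binomial identities $(k+1)C_{n-1}^{k+1}/C_{n-1}^k=n-1-k$ and $(n-k)C_{n-1}^{k-1}/C_{n-1}^k=k$, which convert the displayed identity into $\frac{d}{dt}\int_{\Sigma_t}H_k\,d\mu_t=\int_{\Sigma_t}\big((n-1-k)H_{k+1}-\v k H_{k-1}\big)F\,d\mu_t$, exactly \eqref{variation}. The only step demanding real care is the curvature bookkeeping in \eqref{e02}, where the sign and placement of the $\v$-term must be tracked through the commutator; everything downstream is algebra, the two trace identities, and one integration by parts.
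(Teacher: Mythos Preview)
The paper does not prove this lemma at all; it is quoted from \cite{GuanLi2} and used as a black box (the computation of $\partial_t\sigma_k$ in the proof of Proposition~\ref{prop1} reproduces exactly your argument for \eqref{variation}). Your derivation is the standard one and is correct in structure and conclusion.

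One slip to fix: you write $\partial_tg^{jk}=-g^{ja}g^{kb}\partial_tg_{ab}=-2F(h^2)^{jk}$, but $g^{ja}g^{kb}(2Fh_{ab})=2Fh^{jk}$, so in fact $\partial_tg^{jk}=-2Fh^{jk}$. The term you actually need in the next line is $(\partial_tg^{jk})h_{ik}=-2Fh^{jk}h_{ik}=-2F(h^2)_i^{\ j}$, which is what you use, so the combination $-2F(h^2)_i^{\ j}+F(h^2)_i^{\ j}=-F(h^2)_i^{\ j}$ and the resulting \eqref{e03} are correct; only the intermediate display is misstated.
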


\begin{prop}\label{prop1}
Let $\Sigma_t$ be a smooth family of closed hypersurfaces in  space forms $N^n(\v)$ satisfying a general flow
\begin{equation}\label{general flow}
\frac {\partial}{\partial t} X=F\nu,
\end{equation}
where $\nu$ is the outward unit normal of $\Sigma_t$ and $F$ is a smooth function on $\Sigma_t$.
Then for all $k=1,\cdots, n-1$ and $C^2$ function $f$, we have
\begin{align}\label{Vpk}
&\frac{d}{dt}\int_{\Sigma_t} f(\Phi) H_k d\mu_t\nonumber\\
&=\int_{\Sigma_t}\bigg((n-1-k)\,f(\Phi) H_{k+1}+(k+1)f'(\Phi){u} H_k - k\left(f'(\Phi)-\frac{f(\Phi)}{\Phi}\right) \lambda'(r)H_{k-1}\nonumber\\
&\qquad\quad -k \left(\frac{f(\Phi)}{\Phi}\right)H_{k-1} -\frac{1}{C_{n-1}^k}f''(\Phi)T_{k-1}^{ij}\nabla_i\Phi\nabla_j\Phi\bigg)Fd\mu_t.
\end{align}
where $\Phi=\Phi(r)=\int_0^r \lambda(s) ds$ as defined in \eqref{Phi}.
Furthermore, for the case of $k=1$ (the mean curvature case), we have
\begin{align}\label{Vp1}
&\frac{d}{dt}\left(\int_{\Sigma_t} f(\Phi) H_1 d\mu_t+\int_{\Omega_t}\v f(\Phi)dv_t\right)\nonumber\\
&=\int_{\Sigma_t}\bigg((n-2)\,f(\Phi) H_{2}+2 f'(\Phi){u} H_1 - f'(\Phi)\lambda'(r)-\frac{1}{n-1}f''(\Phi)\|\nabla\Phi\|^2\bigg)Fd\mu_t,
\end{align}
where $\Omega_t$ denote the bounded domains enclosed by $\Sigma_t$.
\end{prop}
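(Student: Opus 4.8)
The plan is to differentiate the integrand $f(\Phi)H_k\,d\mu_t$ and integrate, treating the three factors separately and then reducing the second-order term by integration by parts. First I would record the variation of each elementary piece along $\frac{\partial}{\partial t}X=F\nu$. Tracing \eqref{e01} gives $\partial_t\,d\mu_t=F\sigma_1\,d\mu_t$. Since $\Phi$ is an ambient radial function with $\bar\nabla\Phi=\lambda(r)\partial_r$, I compute $\partial_t\Phi=\langle\bar\nabla\Phi,F\nu\rangle=uF$, whence $\partial_t f(\Phi)=f'(\Phi)\,uF$. For the curvature factor I use $\partial_t\sigma_k=(T_{k-1})^i_j\,\partial_t h^j_i$ together with \eqref{e03}, the trace identity $\mathrm{tr}\,T_{k-1}=(n-k)\sigma_{k-1}$, and $\sum T_{k-1}^{ij}(h^2)_{ij}=\sigma_1\sigma_k-(k+1)\sigma_{k+1}$ from \eqref{e00}. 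Combining these with $\partial_t\,d\mu_t$, the $\sigma_1\sigma_k$ contributions cancel and I obtain
\[
\partial_t(\sigma_k\,d\mu_t)=\Big[-T_{k-1}^{ij}\nabla_i\nabla_j F+(k+1)F\sigma_{k+1}-\v(n-k)F\sigma_{k-1}\Big]\,d\mu_t.
\]

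The heart of the argument, and the step I expect to be the \emph{main obstacle}, is the second-order term $-\int_{\Sigma}f(\Phi)\,T_{k-1}^{ij}\nabla_i\nabla_j F\,d\mu$, which carries all the dependence on derivatives of the speed $F$. Here I integrate by parts twice. The first integration by parts uses that the Newton tensor is divergence free in a space form, $\nabla_i T_{k-1}^{ij}=0$, so only the weight gets differentiated and one is left with $\int f'(\Phi)\,T_{k-1}^{ij}\nabla_i\Phi\,\nabla_j F\,d\mu$. The second integration by parts produces, after normalization by $C_{n-1}^k$, the Hessian term $-\tfrac{1}{C_{n-1}^k}f''(\Phi)\,T_{k-1}^{ij}\nabla_i\Phi\nabla_j\Phi$ together with $-f'(\Phi)\nabla_j(T_{k-1}^{ij}\nabla_i\Phi)$, and this last divergence is evaluated directly by \eqref{nablak} (equivalently by $\nabla_i\nabla_j\Phi=\lambda'(r)g_{ij}-uh_{ij}$ from \eqref{nabla} combined with the two contraction identities). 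This contributes $-k f'(\Phi)\big(\lambda'(r)H_{k-1}-uH_k\big)$.

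Collecting the pieces and dividing by $C_{n-1}^k$, using the binomial identities $(k+1)C_{n-1}^{k+1}=(n-1-k)C_{n-1}^k$ and $(n-k)C_{n-1}^{k-1}=kC_{n-1}^k$, the two $f'(\Phi)uH_k$ contributions (from $\partial_t f(\Phi)$ and from the integration by parts) merge into $(k+1)f'(\Phi)uH_k$, and I reach an expression with lower-order terms $-kf'(\Phi)\lambda'(r)H_{k-1}-\v k f(\Phi)H_{k-1}$. To match the stated form \eqref{Vpk} it then remains to invoke the algebraic identity $\lambda'(r)=1-\v\Phi$, valid in all three space forms (immediate from the explicit expressions in \eqref{Phi}, or from $\lambda''=-\v\lambda$ and $\Phi'=\lambda$). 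Writing $-\v k f(\Phi)H_{k-1}=k\tfrac{f(\Phi)}{\Phi}(\lambda'(r)-1)H_{k-1}$ and regrouping with $-kf'(\Phi)\lambda'(r)H_{k-1}$ yields precisely $-k\big(f'(\Phi)-\tfrac{f(\Phi)}{\Phi}\big)\lambda'(r)H_{k-1}-k\tfrac{f(\Phi)}{\Phi}H_{k-1}$, establishing \eqref{Vpk}.

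Finally, for the mean curvature case \eqref{Vp1} I specialize to $k=1$, where $T_0^{ij}=g^{ij}$ so the Hessian term becomes $\tfrac{1}{n-1}f''(\Phi)\|\nabla\Phi\|^2$ and $H_0=1$, and then add the weighted first variation of the enclosed volume, $\frac{d}{dt}\int_{\Omega_t}\v f(\Phi)\,dv_t=\v\int_{\Sigma_t}f(\Phi)F\,d\mu_t$. A last application of $\lambda'(r)=1-\v\Phi$ collapses the three lower-order terms $-\big(f'(\Phi)-\tfrac{f(\Phi)}{\Phi}\big)\lambda'(r)-\tfrac{f(\Phi)}{\Phi}+\v f(\Phi)$ into the single term $-f'(\Phi)\lambda'(r)$, which gives \eqref{Vp1}. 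The only genuinely delicate bookkeeping is the double integration by parts and the reliance on \eqref{nablak}; everything else is assembly and the repeated use of $\lambda'(r)=1-\v\Phi$.
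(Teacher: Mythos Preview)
Your proposal is correct and follows essentially the same route as the paper: compute $\partial_t\Phi$, $\partial_t d\mu_t$, and $\partial_t\sigma_k$ via \eqref{e03} and \eqref{e00}, integrate by parts twice on the second-order term using the divergence-free Newton tensor and \eqref{nablak}, normalize with the binomial identities, and finish with $\lambda'(r)+\v\Phi=1$. The only cosmetic difference is that the paper derives the $k=1$ formula \eqref{Vp1} directly from the intermediate expression (before the $\tfrac{f(\Phi)}{\Phi}$ regrouping) rather than specializing \eqref{Vpk} and then simplifying, but this is the same computation in a different order.
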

\begin{proof}
First, note that
\begin{align}
&\frac{\partial}{\partial t} \Phi = \langle\bar\nabla \Phi,\frac{\partial}{\partial t} X \rangle
 = \langle \lambda'(r)\partial r, F\nu\rangle = Fu,\label{e1}\\
 &\frac{\partial}{\partial t} d\mu_t =F\sigma_1 d\mu_t,\label{e2}
\end{align}
where $\bar\nabla$ is the covariant derivative with respect to the ambient space $N^n(\v)$.

Moreover, applying  (\ref{e03}) and (\ref{e00}), we obtain
\begin{align}\label{e3}
\frac{\partial}{\partial t} \sigma_k = &(T_{k-1})^i_j\;\frac{\partial}{\partial t} h^{j}_i\nonumber\\
 =&(T_{k-1})^i_j\bigg(-\nabla_i\nabla^j F-F\left((h^2)^{j}_i+\v\delta_i^j\right)\bigg)\nonumber\\
 =&T_{k-1}^{ij}\bigg(-\nabla_i\nabla_j F-F(h^2)_{ij}-\v F\delta_i^j\bigg)\nonumber\\
 =&-T_{k-1}^{ij} \nabla_i\nabla_j F-F\big(\sigma_1\sigma_k-(k+1)\sigma_{k+1}\big)-\v (n-k)\sigma_{k-1}F.
\end{align}
Combining (\ref{e1})-(\ref{e3}), we derive along the flow (\ref{general flow}) that
\begin{align}
&\frac{d}{d t}\int_{\Sigma_t} f(\Phi) \sigma_k d\mu_t\nonumber\\
&=\int_{\Sigma_t}\bigg(\frac{\partial}{\partial t} f(\Phi)\bigg)\sigma_k d\mu_t+\int_{\Sigma_t}f(\Phi)\frac{\partial\sigma_k}{\partial t} d\mu_t+\int_{\Sigma_t}f(\Phi)\sigma_k (F\sigma_1)d\mu_t\nonumber\\
&=\int_{\Sigma_t}f'(\Phi)uF\sigma_k\!d\mu_t+\!\int_{\Sigma_t}\!f(\Phi)\bigg(\!-\!T_{k-1}^{ij}\nabla_i\nabla_j F
- F\big(\sigma_1\sigma_k\!-\!(k\!+\!1)\sigma_{k+1}\big)-\v F (n-k)\sigma_{k-1} \bigg)d\mu_t\nonumber\\
&\quad + \int_{\Sigma_t} f(\Phi) \sigma_k\sigma_1 F d\mu_t\nonumber\\
&=\int_{\Sigma_t}\bigg(f'(\Phi) u\sigma_k- T_{k-1}^{ij} \nabla_i\nabla_j (f(\Phi)) +(k+1)f(\Phi) \sigma_{k+1}-\v (n-k)f(\Phi)\sigma_{k-1} \bigg) F d\mu_t\nonumber\\
&=\int_{\Sigma_t}\bigg(f'(\Phi) u\sigma_k -f'(\Phi)\big((n-k)\lambda'(r)\sigma_{k-1}-k\sigma_k u\big)-f''(\Phi)T_{k-1}^{ij}\nabla_i\Phi\nabla_j\Phi\nonumber\\
&\qquad\quad  +(k+1)f(\Phi) \sigma_{k+1}-\v (n-k)f(\Phi)\sigma_{k-1} \bigg) F d\mu_t\nonumber\\
&=\int_{\Sigma_t}\bigg((k+1)f'(\Phi)u\sigma_k + (k+1)f(\Phi)\sigma_{k+1}- (n-k)f'(\Phi)\lambda'(r)\sigma_{k-1}-\v (n-k)f(\Phi)\sigma_{k-1} \nonumber\\
&\qquad\quad -f''(\Phi)T_{k-1}^{ij}\nabla_i\Phi\nabla_j\Phi\bigg) F d\mu_t,\nonumber
\end{align}
where we used the divergence-free property of the Newton tensor in the third equality, and (\ref{nablak}) in the fourth equality.
Finally, it  follows from the normalization
$H_k=\frac{\s_k}{C_{n-1}^k}$ that
\begin{align}\nonumber
&\frac{d}{dt}\int_{\Sigma_t} f(\Phi) H_k d\mu_t\nonumber\\
&=\int_{\Sigma_t}\bigg((n-1-k)\,f(\Phi) H_{k+1}+(k+1)f'(\Phi){u} H_k - kf'(\Phi)\lambda'(r)H_{k-1}-\v kf(\Phi)H_{k-1}\nonumber\\
&\qquad\quad -\frac{1}{C_{n-1}^k}f''(\Phi)T_{k-1}^{ij}\nabla_i\Phi\nabla_j\Phi\bigg)Fd\mu_t\label{eqvpk}\\
&=\int_{\Sigma_t}\bigg((n-1-k)\,f(\Phi) H_{k+1}+(k+1)f'(\Phi){u} H_k - k\left(f'(\Phi)-\frac{f(\Phi)}{\Phi}\right) \lambda'(r)H_{k-1}\nonumber\\
&\qquad\quad -k(\lambda'(r)+\v\Phi)\frac{f(\Phi)}{\Phi}H_{k-1} -\frac{1}{C_{n-1}^k}f''(\Phi)T_{k-1}^{ij}\nabla_i\Phi\nabla_j\Phi\bigg)Fd\mu_t.\nonumber
\end{align}
Since for any $\v=-1, 0 $ or $1$, the identity
\begin{equation}\label{relation}
\lambda'(r)+\v\Phi=1,
\end{equation}
always holds, we  thus get the desired evolution equation (\ref{Vpk}).

Applying (\ref{eqvpk}) to the case $k=1$, we have
\begin{align}\label{eVp1}
&\frac{d}{dt} \int_{\Sigma_t} f(\Phi) H_1 d\mu_t\nonumber\\
&=\int_{\Sigma_t}\bigg((n-2)\,f(\Phi) H_{2}+2 f'(\Phi){u} H_1 - f'(\Phi)\lambda'(r)-\v f(\Phi)\nonumber\\
&\qquad\quad-\frac{1}{n-1}f''(\Phi)\|\nabla\Phi\|^2\bigg)Fd\mu_t.
\end{align}
On the other hand, the co-area formula yields
\begin{equation}\label{coarea}
\frac{d}{dt}\int_{\Omega_t}f(\Phi)dv_t= \int_{\Sigma_t}f(\Phi) F d\mu_t.
\end{equation}
Combining (\ref{eVp1}) and (\ref{coarea}) gives (\ref{Vp1}).
\end{proof}

\section{Proof of Theorem $1.1$ }
In this section, we prove Theorem $1.1$ on the weighted Alexandrov-Fenchel type inequality with convex weighting functions for hypersurfaces in $\H^n$.
A key step to the proof involves proving the monotonicity theorem stated below.
\begin{theo}
Suppose that $\Sigma_t$ is a family of static convex hypersurfaces which is the solution of the flow \eqref{flow1} in $\H^n$. Then for $k=2,\cdots, n-1$, and non-decreasing convex $C^2$ positive function $f$ which satisfies
\begin{equation}\label{fc}
f'(s)\geq \frac{k}{k-1}\frac{f(s)}{s},
\end{equation}
 we have the monotonicity of the general weighted $k$-th mean curvature integral
$$\frac{d}{dt}\int_{\Sigma_t} f(\Phi) H_k\leq 0,$$
along the flow \eqref{flow1}. Moreover, equality  holds if and
only if $\Sigma_t$ is a geodesic sphere centered at the origin.
\end{theo}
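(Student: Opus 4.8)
The plan is to substitute the flow speed $F=\frac{H_{k-1}}{H_k}-\frac{u}{\lambda'(r)}$ and $\v=-1$ into the general variational identity \eqref{Vpk} and to show that the resulting integrand integrates to a nonpositive quantity. I will use throughout that static convexity forces $\kappa_i>\frac{u}{\lambda'(r)}>0$, so $\Sigma_t$ is strictly convex; hence every $H_j>0$, every Newton tensor $T_j$ is positive semidefinite, and, crucially, $h-\frac{u}{\lambda'(r)}I>0$ as a symmetric endomorphism. A first reduction simplifies the zeroth-order part of \eqref{Vpk}: since $\lambda'(r)-1=\Phi$ in $\H^n$, the two terms carrying $\frac{f(\Phi)}{\Phi}$ combine and the pure-$f$ contribution collapses to $f(\Phi)\big[(n-1-k)H_{k+1}+kH_{k-1}\big]$. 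Thus the integrand of $\frac{d}{dt}\int f(\Phi)H_k\,d\mu$ becomes $Q\,F$ with
\[
Q=f(\Phi)\big[(n-1-k)H_{k+1}+kH_{k-1}\big]+f'(\Phi)\big[(k+1)uH_k-k\lambda'(r)H_{k-1}\big]-\tfrac{1}{C_{n-1}^{k}}f''(\Phi)T_{k-1}^{ij}\nabla_i\Phi\nabla_j\Phi .
\]

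Next I handle the second-order term. Writing $f''(\Phi)\nabla_i\Phi=\nabla_i\big(f'(\Phi)\big)$ and integrating by parts against the divergence identity \eqref{nablak}, in the form $\nabla_i(T_{k-1}^{ij}\nabla_j\Phi)=kC_{n-1}^k(\lambda'(r)H_{k-1}-uH_k)=kC_{n-1}^k\lambda'(r)H_kF$, produces a term $k\int f'(\Phi)\lambda'(r)H_kF^2\,d\mu$ together with a cross term $\frac{1}{C_{n-1}^k}\int f'(\Phi)T_{k-1}^{ij}\nabla_j\Phi\,\nabla_iF\,d\mu$. On the other hand, extracting the factor $\lambda'(r)H_kF=\lambda'(r)H_{k-1}-uH_k$ from the $f'$-bracket rewrites $f'(\Phi)\big[(k+1)uH_k-k\lambda'(r)H_{k-1}\big]$ as $f'(\Phi)uH_k-kf'(\Phi)\lambda'(r)H_kF$; the second piece contributes $-k\int f'(\Phi)\lambda'(r)H_kF^2$, which exactly cancels the term created by the integration by parts. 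After this cancellation,
\[
\frac{d}{dt}\!\int f(\Phi)H_k\,d\mu=\int\!\Big\{f(\Phi)\big[(n-1-k)H_{k+1}+kH_{k-1}\big]+f'(\Phi)uH_k\Big\}F\,d\mu+\frac{1}{C_{n-1}^k}\int f'(\Phi)T_{k-1}^{ij}\nabla_j\Phi\,\nabla_iF\,d\mu .
\]

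Then I dispatch the gradient cross term by expanding $\nabla_iF$. From \eqref{nabla u}, $\nabla_i u=h_i^l\nabla_l\Phi$, and since $\nabla_i\lambda'(r)=\nabla_i\Phi$ one gets $\nabla_i\frac{u}{\lambda'}=\frac{1}{\lambda'}h_i^l\nabla_l\Phi-\frac{u}{(\lambda')^2}\nabla_i\Phi$. The two resulting pieces assemble into $-\frac{f'(\Phi)}{\lambda'(r)}\,T_{k-1}^{ij}\big(h-\frac{u}{\lambda'(r)}I\big)_i^l\nabla_l\Phi\,\nabla_j\Phi$, a nonpositive quadratic form precisely because $T_{k-1}\geq0$ commutes with $h$ and static convexity gives $h-\frac{u}{\lambda'(r)}I>0$ (this is exactly where static convexity enters). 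For the remaining piece $\frac{1}{C_{n-1}^k}\int f'(\Phi)T_{k-1}^{ij}\nabla_j\Phi\,\nabla_i\frac{H_{k-1}}{H_k}\,d\mu$ I integrate by parts once more and invoke \eqref{nablak} again; this yields a manifestly nonpositive term $-\frac{1}{C_{n-1}^k}\int\frac{H_{k-1}}{H_k}f''(\Phi)T_{k-1}^{ij}\nabla_i\Phi\nabla_j\Phi\,d\mu$ (using $f''\geq0$) plus one purely algebraic term $-k\int f'(\Phi)\lambda'(r)H_{k-1}F\,d\mu$. Collecting everything, the monotonicity reduces to the single scalar inequality $\int\tilde P\,d\mu\leq0$, where $\tilde P=\big\{f(\Phi)[(n-1-k)H_{k+1}+kH_{k-1}]+f'(\Phi)uH_k-kf'(\Phi)\lambda'(r)H_{k-1}\big\}F$, all other contributions being already nonpositive.

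I expect $\int\tilde P\,d\mu\leq0$ to be the main obstacle. The plan is to expand $\tilde P$ via $F=\frac{H_{k-1}}{H_k}-\frac{u}{\lambda'(r)}$, to bound the top-order ratio through the Newton--MacLaurin inequality \eqref{N-M} in the form $\frac{H_{k+1}H_{k-1}}{H_k}\leq H_k$, and to remove the remaining $u$- and $\lambda'$-weights using the (weighted) Minkowski identities coming from \eqref{nabla}--\eqref{Minkowski}. The condition \eqref{fc}, equivalently $(k-1)\Phi f'(\Phi)-kf(\Phi)\geq0$ by \eqref{f'}, is precisely what forces the coefficient of the surviving $H_{k-1}$-terms to carry the correct sign (the borderline $f(\Phi)=\Phi^{k/(k-1)}$ making it vanish, which also explains why the statement requires $k\geq2$). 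Finally, every inequality used above degenerates to an equality only at umbilic points with $\nabla\Phi=0$; tracing back through \eqref{N-M} and the static-convexity quadratic form forces $\Sigma_t$ to be totally umbilic with $r$ constant, i.e.\ a geodesic sphere centered at the origin. Since the flow \eqref{flow1} converges to such a sphere and the functional is monotone, equality at any time forces $\Sigma_t$ to be a centered geodesic sphere.
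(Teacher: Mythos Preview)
Your integrations by parts and the use of static convexity on the $\nabla_i(u/\lambda')$ piece are correct and mirror the paper's treatment of its terms II and V. The gap is in the last paragraph: you declare the term $-\frac{1}{C_{n-1}^k}\int\frac{H_{k-1}}{H_k}f''(\Phi)T_{k-1}^{ij}\nabla_i\Phi\nabla_j\Phi$ ``manifestly nonpositive'' and throw it away, then claim $\int\tilde P\le0$ will follow from Newton--MacLaurin plus Minkowski plus \eqref{fc}. But carrying out your own plan forces that discarded term back. Indeed, after bounding $(n-1-k)fH_{k+1}F$ via $H_{k+1}H_{k-1}\le H_k^2$ and integrating by parts against $T_k$ (which gives only $f,f'$ and is fine), the residual is $\int[kf-(k-1)f'\lambda']H_{k-1}F-\int f'\lambda'H_kF^2$; the coefficient $kf-(k-1)f'\lambda'\le0$ is exactly where \eqref{fc} enters, and the only way to use it is Newton--MacLaurin \emph{downwards}, $H_{k-1}^2/H_k\ge H_{k-2}$, followed by \eqref{nablak} with $T_{k-2}$. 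That integration by parts produces $+\frac{1}{C_{n-1}^{k-1}}\int f''(\Phi)T_{k-2}^{ij}\nabla_i\Phi\nabla_j\Phi$, a \emph{positive} term you cannot absorb unless you kept the $-\frac{H_{k-1}}{H_k}f''T_{k-1}$ piece.

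What closes the argument is precisely the identity the paper records as \eqref{key},
\[
\frac{1}{C_{n-1}^k}\,\frac{H_{k-1}}{H_k}\,T_{k-1}^{ij}-\frac{1}{C_{n-1}^{k-1}}\,T_{k-2}^{ij}
=\frac{H_{k-1}^2}{H_k}\,g^{js}\frac{\partial}{\partial h_i^s}\!\left(\frac{H_k}{H_{k-1}}\right)\ge0,
\]
which shows that the two $f''$ contributions combine to something nonpositive. Your sketch never mentions descending to $T_{k-2}$ or anything like this Newton-tensor comparison, so the step ``$\int\tilde P\le0$'' is not established. Once you retain the $f''T_{k-1}$ term and invoke \eqref{key}, your reorganisation becomes essentially the paper's proof in a different order.
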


\begin{proof}
By choosing the speed function $F=\frac {H_{k-1}}{H_k}-\frac{u}{\lambda'}$ in (\ref{Vpk}),  we have
\begin{align}\label{eq1}
&\frac{d}{dt}\int_{\Sigma_t} f(\Phi) H_k d\mu_t\nonumber\\
=& \int_{\Sigma_t}\bigg((n-1-k)\,f(\Phi) H_{k+1}+(k+1)f'(\Phi){u} H_k - k\left(f'(\Phi)-\frac{f(\Phi)}{\Phi}\right)\lambda'H_{k-1}\bigg)\bigg(\frac {H_{k-1}}{H_k}-\frac{u}{\lambda'}\bigg)d\mu_t\nonumber\\
& -\int_{\Sigma_t}k \frac{f(\Phi)}{\Phi} H_{k-1}\bigg(\frac {H_{k-1}}{H_k}-\frac{u}{\lambda'}\bigg)d\mu_t -\int_{\Sigma_t}\frac{1}{C_{n-1}^k}f''(\Phi)T_{k-1}^{ij}\nabla_i\Phi\nabla_j\Phi\bigg(\frac {H_{k-1}}{H_k}-\frac{u}{\lambda'}\bigg)d\mu_t\nonumber\\
=&\int_{\Sigma_t}(n-1-k)\frac{f(\Phi)}{\lambda'} \bigg(\lambda' \frac{H_{k+1}H_{k-1}}{H_k}-u H_{k+1}\bigg)d\mu_t+\int_{\Sigma_t} kf'(\Phi)\frac{u}{\lambda'}(\lambda'H_{k-1}-u H_k)d\mu_t\nonumber\\
&-\int_{\Sigma_t}f'(\Phi)\lambda' H_k\bigg(\frac{H_{k-1}}{H_k}-\frac{u}{\lambda'}\bigg)^2d\mu_t\nonumber\\
&-\int_{\Sigma_t}\left((k-1)f'(\Phi)-k\frac{f(\Phi)}{\Phi}\right)\bigg(\lambda'\frac{H_{k-1}^2}{H_k}
-uH_{k-1}\bigg)d\mu_t-\int_{\Sigma_t}k\frac{f(\Phi)}{\Phi}\frac{1}{\lambda'}\bigg(\lambda'\frac {H_{k-1}^2}{H_k}-uH_{k-1}\bigg)d\mu_t \nonumber\\
&-\frac{1}{C_{n-1}^k}\int_{\Sigma_t}f''(\Phi)T_{k-1}^{ij}\nabla_i\Phi\nabla_j\Phi\bigg(\frac {H_{k-1}}{H_k}-\frac{u}{\lambda'}\bigg)d\mu_t\nonumber\\
\leq &\int_{\Sigma_t}(n-1-k)\frac{f(\Phi)}{\lambda'} \bigg(\lambda' \frac{H_{k+1}H_{k-1}}{H_k}-u H_{k+1}\bigg)d\mu_t+\int_{\Sigma_t} kf'(\Phi)\frac{u}{\lambda'}(\lambda'H_{k-1}-u H_k)d\mu_t\nonumber\\
&-\int_{\Sigma_t}\left((k-1)f'(\Phi)-k\frac{f(\Phi)}{\Phi}\right)\bigg(\lambda'\frac{H_{k-1}^2}{H_k}
-uH_{k-1}\bigg)d\mu_t-\int_{\Sigma_t}k\frac{f(\Phi)}{\Phi}\frac{1}{\lambda'}\bigg(\lambda'\frac {H_{k-1}^2}{H_k}-uH_{k-1}\bigg)d\mu_t \nonumber\\
&-\frac{1}{C_{n-1}^k}\int_{\Sigma_t}f''(\Phi)T_{k-1}^{ij}\nabla_i\Phi\nabla_j\Phi\bigg(\frac {H_{k-1}}{H_k}-\frac{u}{\lambda'}\bigg)d\mu_t\nonumber\\
=&\; I+II+III+IV+V,
\end{align}
where the non-negativity condition $f'(\Phi)\geq 0$ is utilized in the last inequality.
We  now analyze  each term on the right hand side of (\ref{eq1}) individually. First, by applying the Newton-Maclaurin inequality (\ref{N-M})  and the identity (\ref{nablak}), we derive
\begin{eqnarray}\label{eqI}
I&=&\int_{\Sigma_t}(n-1-k)\frac{f(\Phi)}{\lambda'} \bigg(\lambda' \frac{H_{k+1}H_{k-1}}{H_k}-uH_{k+1}\bigg)d\mu_t\nonumber\\
&\leq& \int_{\Sigma_t}(n-1-k)f(\Phi)\frac{1}{\lambda'} \bigg(\lambda' H_k-uH_{k+1}\bigg)d\mu_t\nonumber\\
&=& \frac{1}{(k+1)C_{n-1}^{k+1}}\int_{\Sigma_t}(n-1-k)f(\Phi)\frac{1}{\lambda'}\nabla_i\left(T_{k}^{ij}\nabla_j\Phi\right)d\mu_t\nonumber\\
&=& -\frac{1}{C_{n-1}^{k}}\int_{\Sigma_t} \nabla_i \left(\frac{f(\Phi)}{\Phi}\frac{\Phi}{\lambda'}\right)T_{k}^{ij}\nabla_j\Phi d\mu_t\nonumber\\
&=& -\frac{1}{C_{n-1}^{k}}\int_{\Sigma_t} \nabla_i\left(\frac{f(\Phi)}{\Phi}\right) \frac{\Phi}{\lambda'} T_{k}^{ij}\nabla_i\Phi\nabla_j\Phi d\mu_t-\frac{1}{C_{n-1}^{k}}\int_{\Sigma_t} \frac{f(\Phi)}{\Phi}\nabla_i\left(\frac{\Phi}{\lambda'}\right)T_{k}^{ij}\nabla_j\Phi d\mu_t,
\end{eqnarray}
where we used  the identity $(k+1)C_{n-1}^{k+1}=(n-1-k)C_{n-1}^k$ and integration by parts in the third equality.
A direct computation gives
\begin{eqnarray}\label{eq1'}
\nabla_i\left(\frac{f(\Phi)}{\Phi}\right)&=&\frac{f'(\Phi)\Phi-f(\Phi)}{(\Phi)^2}\nabla_i\Phi.
\end{eqnarray}
Using the identity
$
\lambda'-\Phi=1
$  in $\H^n$, we obtain
\begin{eqnarray}\label{eq2'}
\nabla_i\left(\frac{\Phi}{\lambda'}\right)&=&\frac{(\nabla_i \Phi) \lambda'-\Phi\nabla_i \lambda'}{(\lambda')^2}
=\frac{(\nabla_i \Phi) \lambda'-\Phi\nabla_i \Phi}{(\lambda')^2}\nonumber\\
&=&\frac{1}{(\lambda')^2}\nabla_i\Phi.
\end{eqnarray}
Substituting (\ref {eq1'}) and (\ref {eq2'}) into (\ref{eqI}), using the positive definiteness of the Newton tensor $T_k$ and the assumption
$$f'(s)\geq \frac k {k-1}\frac{f(s)}s \geq \frac{f(s)}s,$$
 we conclude the non-positivity of the first term
\begin{equation}\label{I}
I\leq 0.
\end{equation}
The inequality \eqref{I} is strict unless $\nabla\Phi\equiv 0$ on $\Sigma_t$ , which means that $\Sigma_t$ is a geodesic sphere centered at the origin.

Next, we deal with the term $II$ by multiplying the function $f'(\Phi)\frac{u}{\lambda'}$ on  both sides of (\ref{nablak}) and integrating by parts.
Since $T_{k-1}^{ij}$ is divergence-free,  we deduce that
\begin{align*}
&II=\int_{\Sigma_t} kf'(\Phi)\frac{u}{\lambda'}(\lambda'H_{k-1}-u H_k)d\mu_t=\frac{1}{C_{n-1}^k} \int_{\Sigma_t} f'(\Phi)\frac{u}{\lambda'}\nabla_i(T_{k-1}^{ij}\nabla_j\Phi)d\mu_t\\
=&-\frac{1}{C_{n-1}^k}\int_{\Sigma_t}f''(\Phi)\frac{u}{\lambda'} T_{k-1}^{ij}\nabla_i\Phi\nabla_j\Phi d\mu_t
-\frac{1}{C_{n-1}^k}\int_{\Sigma_t}f'(\Phi)T_{k-1}^{ij}\nabla_i\left( \frac{u}{\lambda'}\right)\nabla_j\Phi d\mu_t.
\end{align*}
Applying the identities (\ref{nabla u}) and (\ref{relation}), we compute
$$
\nabla_i\left(\frac{u}{\lambda'}\right)=\frac{h_i^s(\nabla_s \Phi) \lambda'-u\nabla_i \lambda'}{(\lambda')^2}
=\frac{\left(h_i^s \lambda'- u\delta_i^s\right) \nabla_s\Phi}{(\lambda')^2}.
$$
Thus we get
$$-\frac{1}{C_{n-1}^k}\int_{\Sigma_t}f'(\Phi)T_{k-1}^{ij}\nabla_i\left( \frac{u}{\lambda'}\right)\nabla_j\Phi d\mu_t\leq 0,$$
where we used $\left(h_i^s \lambda'- u\delta_i^s\right)\geq 0$ and $T_{k-1}$ is positively definite, since $\Sigma_t$ is static convex.
This leads to
\begin{equation}\label{eq2}
II\leq -\frac{1}{C_{n-1}^k}\int_{\Sigma_t}f''(\Phi)\frac{u}{\lambda'} T_{k-1}^{ij}\nabla_i\Phi\nabla_j\Phi d\mu_t.
\end{equation}
Therefore,
\begin{equation}\label{eq25}
  II+V\leq -\frac{1}{C_{n-1}^k}\int_{\Sigma_t}f''(\Phi)T_{k-1}^{ij}\frac {H_{k-1}}{H_k}\nabla_i\Phi\nabla_j\Phi d\mu_t.
\end{equation}
For the case $k=2,\cdots, n-1,$  and under the assumption
$$(k-1)f'(s)-k\frac{f(s)}{s}\geq 0,$$
we can use the Newton-MacLaurin inequality (\ref{N-M}) to deal with the third term III as follows:
\begin{align}\label{eq5}
III=&-\int_{\Sigma_t}\left((k-1)f'(\Phi)-k\frac{f(\Phi)}{\Phi}\right)\bigg(\lambda'\frac{H_{k-1}^2}{H_k}
-uH_{k-1}\bigg)d\mu_t\nonumber\\
\leq & -\int_{\Sigma_t}\left((k-1)f'(\Phi)-k\frac{f(\Phi)}{\Phi}\right)(\lambda' H_{k-2}-uH_{k-1})d\mu_t\nonumber\\
=&-\int_{\Sigma_t} \left((k-1)f'(\Phi)-k\frac{f(\Phi)}{\Phi}\right)\frac{1}{(k-1)C_{n-1}^{k-1}}\nabla_i(T^{ij}_{k-2}\nabla_j\Phi)d\mu_t\nonumber\\
=&\frac{1}{C_{n-1}^{k-1}}\int_{\Sigma_t}\left( f''(\Phi)-\frac{k}{k-1}\left(\frac{f(\Phi)}{\Phi}\right)'\right)T^{ij}_{k-2}\nabla_i\Phi \nabla_j\Phi d\mu_t,
\end{align}
where we used (\ref{nablak}) and integration by parts.
By the Newton-MacLaurin inequality (\ref{N-M}), identity (\ref{nablak}) and integration by parts, we derive
\begin{align*}
IV&=-\int_{\Sigma_t}k\frac{f(\Phi)}{\Phi}\frac{1}{\lambda'}\bigg(\lambda'\frac {H_{k-1}^2}{H_k}-uH_{k-1}\bigg)d\mu_t \nonumber\\
&\leq -\int_{\Sigma_t}k\frac{f(\Phi)}{\Phi}\frac{1}{\lambda'}\bigg(\lambda'H_{k-2}-uH_{k-1}\bigg)d\mu_t \nonumber\\
&=-\frac{1}{(k-1)C_{n-1}^{k-1}}\int_{\Sigma_t}k\frac{f(\Phi)}{\Phi}\frac{1}{\lambda'} \nabla_i(T_{ij}^{k-2}\nabla_j\Phi) d\mu_t\\
&=\frac{1}{C_{n-1}^{k-1}}\int_{\Sigma_t}\frac{k}{k-1}\left(\frac{f(\Phi)}{\Phi}\right)'\frac{1}{\lambda'}T^{ij}_{k-2}\nabla_i\Phi \nabla_j\Phi d\mu_t\\
&\quad +\frac{1}{C_{n-1}^{k-1}}\int_{\Sigma_t}\left( \frac{k}{k-1} \frac{f(\Phi)}{\Phi}\right)\nabla_i\left(\frac{1}{\lambda'}\right)T^{ij}_{k-2} \nabla_j\Phi d\mu_t.
\end{align*}
By using again the identy
$
\lambda'-\Phi=1,
$ we obtain
\begin{equation}\label{eq11'}
\nabla_i\left(\frac{1}{\lambda'}\right)= -\frac{\nabla_i \lambda'}{(\lambda')^2}
=-\frac{ \nabla_i \Phi }{(\lambda')^2}.
\end{equation}
Using the positive definiteness of $T_{k-2}$, we have
\begin{align}
&\frac{1}{C_{n-1}^{k-1}}\int_{\Sigma_t}\left( \frac{k}{k-1}\big(\frac{f(\Phi)}{\Phi}\big)\right)\nabla_i\left(\frac{1}{\lambda'}\right)T^{ij}_{k-2} \nabla_j\Phi d\mu_t\nonumber\\
=&-\frac{1}{C_{n-1}^{k-1}}\int_{\Sigma_t}\left( \frac{k}{k-1}\frac{f(\Phi)}{\Phi}\right)\frac{1}{(\lambda')^2}T^{ij}_{k-2} \nabla_i\Phi\nabla_j\Phi d\mu_t\nonumber\\
&\leq 0,
\end{align}
which yields
\begin{align}\label{004}
IV&\leq \frac{1}{C_{n-1}^{k-1}}\int_{\Sigma_t}\frac{k}{k-1}\left(\frac{f(\Phi)}{\Phi}\right)'\frac{1}{\lambda'}T^{ij}_{k-2}\nabla_i\Phi \nabla_j\Phi d\mu_t.
\end{align}
The inequality \eqref{004} is strict unless $\nabla\Phi\equiv 0$ on $\Sigma_t$ , which means that $\Sigma_t$ is a geodesic sphere centered at the origin.
Substituting  (\ref{I}), (\ref{eq25}), (\ref{eq5}) and (\ref{004}) into (\ref{eq1}), we finally arrive at
\begin{align}\label{eq6}
&\frac{d}{dt} \int_{\Sigma_t} f(\Phi) H_k d\mu_t\nonumber\\
\leq&~\frac{1}{C_{n-1}^{k-1}}\int_{\Sigma_t} \frac{k}{k-1}\left(\frac{f(\Phi)}{\Phi}\right)' \left(-1+\frac{1}{\lambda'}\right)T^{ij}_{k-2}\nabla_i\Phi \nabla_j\Phi d\mu_t\nonumber\\
 & -\int_{\Sigma_t} f''(\Phi)\bigg(\frac{1}{C_{n-1}^k}T_{k-1}^{ij}\frac {H_{k-1}}{H_k} -\frac{1}{C_{n-1}^{k-1}}T^{ij}_{k-2}\bigg)\nabla_i\Phi \nabla_j\Phi d\mu_t\nonumber\\
 =&~\frac{1}{C_{n-1}^{k-1}}\int_{\Sigma_t} \frac{k}{k-1}\left(\frac{f(\Phi)}{\Phi}\right)'\left(-\frac{\Phi}{\lambda'}\right)T^{ij}_{k-2}\nabla_i\Phi \nabla_j\Phi d\mu_t\nonumber\\
 & -\int_{\Sigma_t} f''(\Phi)\bigg(\frac{1}{C_{n-1}^k}T_{k-1}^{ij}\frac {H_{k-1}}{H_k} -\frac{1}{C_{n-1}^{k-1}}T^{ij}_{k-2}\bigg)\nabla_i\Phi \nabla_j\Phi d\mu_t\nonumber\\
 &\leq -\int_{\Sigma_t} f''(\Phi)\bigg(\frac{1}{C_{n-1}^k}T_{k-1}^{ij}\frac {H_{k-1}}{H_k} -\frac{1}{C_{n-1}^{k-1}}T^{ij}_{k-2}\bigg)\nabla_i\Phi \nabla_j\Phi d\mu_t,
\end{align}
where we used the positive definiteness of $T_{k-2}$ and the non-negativity
$$\left(\frac{f(s)}{s}\right)'\geq 0,$$
which follows directly from the assumption \eqref{fc} on the function $f$ .

Using the key identity first introduced in \cite{Wu}
\begin{equation}\label{key}
\frac{1}{C_{n-1}^k}T_{k-1}^{ij}\frac {H_{k-1}}{H_k} -\frac{1}{C_{n-1}^{k-1}}T^{ij}_{k-2}
=\frac{H_{k-1}^2}{H_k} g^{js}\frac{\partial}{\partial h_{i}^s}\bigg(\frac{H_k}{H_{k-1}}\bigg),
\end{equation}
we finally conclude that
\begin{align}\label{eq7}
&\frac{d}{dt} \int_{\Sigma_t} f(\Phi) H_k d\mu_t\nonumber\\
\leq&~-\int_{\Sigma_t} f''(\Phi) \frac{H_{k-1}^2}{H_k}\frac{\partial(\frac{H_k}{H_{k-1}})}{\partial h_{i}^j}\nabla_i\Phi \nabla^j\Phi d\mu_t\nonumber\\\
\leq & ~0,
\end{align}
where we used the fact $\frac{H_k}{H_{k-1}}$ is strictly increasing with the shape operator (see e.g. \cite[Theorem 1.4 and Theorem 2.16]{Spruck}) and the convexity assumption $f''(\Phi)\geq 0$ in the last inequality. Moreover, the inequality is strict unless $\nabla\Phi\equiv 0$ on $\Sigma_t$ , which means that $\Sigma_t$ is a geodesic sphere centered at the origin.
\end{proof}

\noindent{\it Proof of Theorem 1.1.}
Let $\Omega_t$ be the bounded domain enclosed by $\Sigma_t$.
By the variational formula  of the quermassintegral \eqref{vari} along the flow (\ref{flow1}), we have for $ 1\leq l\leq k,$
\begin{eqnarray*}
\frac{d}{dt}W_l(\Omega_t)&=&\int_{\Sigma_t} H_l\left(\frac{H_{k-1}}{H_k}-\frac{u}{\lambda'(r)}\right)d\mu\\
&\geq& \int_{\Sigma_t} \frac 1{\lambda'(r)}(\lambda'(r)H_{l-1}-H_l u)d\mu\\
&=& \frac{1}{lC_{n-1}^l}\int_{\Sigma_t}\frac 1 {\lambda'(r)} \nabla_i\left(T_{l-1}^{ij}\nabla_j\Phi\right)d\mu\\
&=& \frac{1}{lC_{n-1}^l}\int_{\Sigma_t}\frac 1 {\left(\lambda'(r)\right)^2}\, T_{l-1}^{ij}\nabla_j\Phi\nabla_i\Phi d\mu\\
&\geq&0,
\end{eqnarray*}
with equality holds if and only if $\Sigma_t$ is a geodesic sphere,
where  we used the Newton-Maclaurin inequality \eqref{N-M} in the first inequality, invoked \eqref{nablak} in the second equality, and applied integration by parts  in the third equality.
We now apply the convergence of the flow \eqref{flow1}.  Let $S_{r_\infty}=\partial B_{r_\infty}$ denote the limiting geodesic sphere.  Then
\begin{equation}\label{4.1}
W_l(\Omega)\leq \lim_{t\rightarrow\infty} W_l(\Omega_t)=W_l(B_{r_\infty})=f_l(r_{\infty}).
\end{equation}
Furthermore, by  Theorem 4.1,
\begin{equation}\label{4.2}
\int_{\Sigma} f(\Phi) H_k d\mu \geq \lim_{t\rightarrow\infty} \int_{\Sigma_t} f(\Phi) H_k d\mu_t=\int_{S_{r_\infty}} f(\Phi) H_k d\mu=\chi_k(r_{\infty}).
\end{equation}
Since both functions $\chi_k:\R_+\rightarrow \R_+ $  and $f_l:\R_+\rightarrow \R_+ $
 are strictly increasing, combining\eqref{4.1} and  \eqref{4.2}, we obtain the desired inequality
\begin{equation}\label{4.9}
\int_{\Sigma} f(\Phi) H_k d\mu\geq \chi_k\circ f_l^{-1}(W_l(\Omega)).
\end{equation}

By the coarea formula and the Newton-Maclaurin inequality \eqref{N-M}, we have
\begin{equation}
\frac{d}{dt}\int_{\Omega_t}\lambda' dv= \int_{\Sigma_t}\lambda'\left(\frac{H_{k-1}}{H_k}- \frac{u}{\lambda'}\right)d\mu_t\geq  \int_{\Sigma_t}\left(\frac{\lambda'}{H_1}- u\right)d\mu_t\geq 0,
\end{equation}
where we used the Heintze-Karcher inequality \cite[Theorem 3.5]{B13} in the last inequality.  This inequality is strict unless $\Sigma_t$ is a geodesic sphere centered at the origin.
Combining with the convergence result of the flow \eqref{flow1},  we deduce that
$$\int_{\Sigma} f(\Phi) H_k d\mu \geq \chi_k(r_{\infty})=\chi_k\circ h^{-1}\left(\int_{B_{r_{\infty}}}\lambda'(r)dv\right)\geq \chi_k\circ h^{-1} \left(\int_{\Omega}\lambda'(r)\right).$$
This proves the inequality \eqref{W0}.
The proof of $l=0$ in \eqref{W2} follows from \eqref{W0} and the weighted geometric inequality
$$\int_{\Omega}\lambda'(r) dv \geq h\circ f_0^{-1}(W_0(\Omega)),$$
which holds for star-shaped domain in $\H^n$ (see \cite[Theorem 1.9]{HL}).

The rigidity of \eqref{W2} (\eqref{W0} resp.) follows from the rigidity of the monotonicity in Theorem 4.1.  This complete the proof of Theorem $1.1$.

\section{Weighted Minkowski type inequalities in hyperbolic space and the sphere }\label{sec+}
In this section, we will focus on the Minkowski type inequaties with general convex weight in  hyperbolic space and the sphere.
\subsection{The hyperbolic space as the ambient space}
We first establish the following monotonicity formula in the hyperbolic space.
\begin{theo}
Suppose that $\Sigma_t$ is a family of static convex hypersurfaces which is the solution of the flow \eqref{flow1} in $\H^n$. Then for any non-decreasing convex $C^2$ positive funciton $f$ which satisfies
\begin{equation}
\left(\frac{f(s)}{s}\right)'\geq 0,
\end{equation}
 we have the monotonicity of the general weighted  mean curvature integral
$$\frac{d}{dt}\left(\int_{\Sigma_t} f(\Phi) H_1 d\mu_t-\int_{\Omega_t} f(\Phi)dv_t\right)\leq 0,$$
along the flow \eqref{flow1}. Moreover, equality  holds if and
only if $\Omega_t$ is a geodesic ball centered at the origin.
\end{theo}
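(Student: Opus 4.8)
The plan is to feed the flow speed $F=\frac{H_0}{H_1}-\frac{u}{\lambda'}=\frac{1}{H_1}-\frac{u}{\lambda'}$ into the variational identity \eqref{Vp1} of Proposition \ref{prop1} with $\v=-1$. The left-hand side of \eqref{Vp1} is then exactly the quantity to be controlled, $\int_{\Sigma_t}f(\Phi)H_1\,d\mu_t-\int_{\Omega_t}f(\Phi)\,dv_t$, so the monotonicity reduces to showing
\begin{equation*}
\int_{\Sigma_t}\Big((n-2)f(\Phi)H_2+2f'(\Phi)uH_1-f'(\Phi)\lambda'-\tfrac{1}{n-1}f''(\Phi)\|\nabla\Phi\|^2\Big)\Big(\tfrac{1}{H_1}-\tfrac{u}{\lambda'}\Big)\,d\mu_t\le 0 .
\end{equation*}
I would split this integrand into three groups, mirroring the scheme of the $k\ge 2$ monotonicity theorem (Theorem 4.1) but with one decisive simplification: here no $H_{k-2}$-type term appears, precisely because subtracting the weighted volume cancels the term $-\v k f(\Phi)H_{k-1}$ which, for $\v=-1$ and $k=1$, would otherwise have survived in \eqref{Vp1}. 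The three groups are the curvature term $(n-2)f(\Phi)H_2F$, the first-order group $\big(2f'(\Phi)uH_1-f'(\Phi)\lambda'\big)F$, and the Hessian term $-\frac{1}{n-1}f''(\Phi)\|\nabla\Phi\|^2F$.

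For the curvature term I would apply Newton--Maclaurin \eqref{N-M} in the form $H_2\le H_1^2$ to bound it by $(n-2)\frac{f(\Phi)}{\lambda'}(\lambda'H_1-uH_2)$, then rewrite $\lambda'H_1-uH_2=\frac{1}{2C_{n-1}^2}\nabla_i(T_1^{ij}\nabla_j\Phi)$ via \eqref{nablak} and integrate by parts. Using $\nabla\lambda'=\nabla\Phi$ in $\H^n$ (which follows from \eqref{relation}) gives $\nabla_i\big(f(\Phi)/\lambda'\big)=\frac{f'(\Phi)\lambda'-f(\Phi)}{(\lambda')^2}\nabla_i\Phi$, and since $f'(\Phi)\lambda'-f(\Phi)=\big(f'(\Phi)\Phi-f(\Phi)\big)+f'(\Phi)\ge 0$ (using $\lambda'=\Phi+1$ and the hypothesis $(f(s)/s)'\ge0$; indeed $>0$ since $f>0$), the positive definiteness of $T_1$ (static convexity forces $\kappa\in\Gamma_2^+$) makes this term $\le 0$, strictly so unless $\nabla\Phi\equiv 0$.

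For the first-order group I would complete the square: a direct expansion shows $\big(2f'(\Phi)uH_1-f'(\Phi)\lambda'\big)F=f'(\Phi)\frac{u}{\lambda'}(\lambda'-uH_1)-f'(\Phi)\lambda'H_1F^2$, whose last term integrates to something $\le 0$ since $f'\ge 0$. For the first piece I would use the $k=1$ case of \eqref{nablak}, namely $\lambda'-uH_1=\frac{1}{n-1}\nabla_i(T_0^{ij}\nabla_j\Phi)$, integrate by parts, and invoke \eqref{nabla u} to get $\nabla_i(u/\lambda')=\frac{(\lambda'h_i^s-u\delta_i^s)\nabla_s\Phi}{(\lambda')^2}$; static convexity $\lambda'h_i^s-u\delta_i^s>0$ then gives the cross term the favourable sign, leaving $\int f'(\Phi)\frac{u}{\lambda'}(\lambda'-uH_1)\,d\mu_t\le-\frac{1}{n-1}\int f''(\Phi)\frac{u}{\lambda'}\|\nabla\Phi\|^2\,d\mu_t$.

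The step I expect to be the real heart of the matter is the consolidation of the two $f''$-contributions, and it is here that the particular structure of the flow pays off. The Hessian term carries the factor $F=\frac{1}{H_1}-\frac{u}{\lambda'}$, while the first-order piece just produced the factor $\frac{u}{\lambda'}$; adding them and using the elementary identity $\frac{u}{\lambda'}+F=\frac{1}{H_1}$ collapses both into $-\frac{1}{n-1}\int f''(\Phi)\|\nabla\Phi\|^2\frac{1}{H_1}\,d\mu_t\le 0$ by convexity of $f$. Summing the three non-positive contributions yields the claimed monotonicity; the equality analysis reduces to $\nabla\Phi\equiv 0$ on $\Sigma_t$ (already forced by the strict curvature-term inequality), which means $r$ is constant, i.e.\ $\Sigma_t$ is a geodesic sphere centered at the origin.
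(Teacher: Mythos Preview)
Your proposal is correct and follows essentially the same route as the paper's proof: the same four-way split (your ``curvature term'' is the paper's $I$, your completed square produces exactly the paper's $II+III$, and your Hessian term is $IV$), the same use of \eqref{nablak} followed by integration by parts for $I$ and $II$, the same invocation of static convexity for the sign of $\nabla_i(u/\lambda')$, and the same collapse $II+IV\le -\frac{1}{n-1}\int f''(\Phi)\frac{1}{H_1}\|\nabla\Phi\|^2\,d\mu_t$. The only cosmetic difference is that you compute $\nabla_i\big(f(\Phi)/\lambda'\big)$ directly via $\lambda'=\Phi+1$, whereas the paper first factors $f(\Phi)/\lambda'=\big(f(\Phi)/\Phi\big)\cdot\big(\Phi/\lambda'\big)$; both lead to the same conclusion $I\le 0$ with strict inequality unless $\nabla\Phi\equiv 0$.
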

\begin{proof}
First by choosing the speed function $F=\frac {1}{H_1}-\frac{u}{\lambda'}$ in (\ref{Vp1}),  we derive
\begin{align}\label{eqH}
&\frac{d}{dt} \left(\int_{\Sigma_t} f(\Phi) H_1 d\mu_t-\int_{\Omega_t} f(\Phi)dv_t\right)\nonumber\\
=& \int_{\Sigma_t}\bigg((n-2)\,f(\Phi) H_{2}+2 f'(\Phi){u} H_1 - f'(\Phi)\lambda'(r)-\frac{1}{n-1}f''(\Phi)\|\nabla\Phi\|^2\bigg)
\bigg(\frac {1}{H_1}-\frac{u}{\lambda'}\bigg)d\mu_t\nonumber\\
=&\int_{\Sigma_t}(n-2)\frac{f(\Phi)}{\lambda'} \bigg(\lambda' \frac{H_2}{H_1}-u H_{2}\bigg)d\mu_t+\int_{\Sigma_t} f'(\Phi)\frac{u}{\lambda'}(\lambda'-u H_1)d\mu_t\nonumber\\
&-\int_{\Sigma_t}f'(\Phi)\lambda' H_1\bigg(\frac{1}{H_1}-\frac{u}{\lambda'}\bigg)^2d\mu_t-\frac{1}{n-1}\int_{\Sigma_t}f''(\Phi)\|\nabla\Phi\|^2\bigg(\frac {1}{H_1}-\frac{u}{\lambda'}\bigg)d\mu_t\nonumber\\
=&\; I+II+III+IV.
\end{align}
Next, we deal with each term separately.   Clearly,   $f'(\Phi)\geq 0$ implies
\begin{equation}\label{H3}
 III\leq 0.
\end{equation}
By the Newton-Maclaurin inequality (\ref{N-M})  and (\ref{nablak}), we deduce
\begin{eqnarray}\label{eqH1}
I&=&\int_{\Sigma_t}(n-2)\frac{f(\Phi)}{\lambda'} \bigg(\lambda' \frac{H_2}{H_1}-uH_{2}\bigg)d\mu_t\nonumber\\
&\leq& \int_{\Sigma_t}(n-2)f(\Phi)\frac{1}{\lambda'} \bigg(\lambda' H_1-uH_{2}\bigg)d\mu_t\nonumber\\
&=& \frac{1}{2C_{n-1}^{2}}\int_{\Sigma_t}(n-2)f(\Phi)\frac{1}{\lambda'}\nabla_i\left(T_{1}^{ij}\nabla_j\Phi\right)d\mu_t\nonumber\\
&=& -\frac{1}{n-1}\int_{\Sigma_t} \nabla_i \left(\frac{f(\Phi)}{\Phi}\frac{\Phi}{\lambda'}\right)T_{k}^{ij}\nabla_j\Phi d\mu_t\nonumber\\
&=& -\frac{1}{n-1}\left(\int_{\Sigma_t} \nabla_i\left(\frac{f(\Phi)}{\Phi}\right) \frac{\Phi}{\lambda'} T_{1}^{ij}\nabla_i\Phi\nabla_j\Phi d\mu_t+\int_{\Sigma_t} \frac{f(\Phi)}{\Phi}\nabla_i\left(\frac{\Phi}{\lambda'}\right)T_{1}^{ij}\nabla_j\Phi d\mu_t\right)
\end{eqnarray}
Through direct computation, we obtain
\begin{eqnarray}\label{eqH2'}
\nabla_i\left(\frac{f(\Phi)}{\Phi}\right)&=&\frac{f'(\Phi)\Phi-f(\Phi)}{(\Phi)^2}\nabla_i\Phi.
\end{eqnarray}
By using the identity
$
\lambda'-\Phi=1,
$ in hyperbolic space $\H^n, $ we derive
\begin{eqnarray}\label{eqH1'}
\nabla_i\left(\frac{\Phi}{\lambda'}\right)&=&\frac{(\nabla_i \Phi) \lambda'-\Phi\nabla_i \lambda'}{(\lambda')^2}
=\frac{(\nabla_i \Phi) \lambda'-\Phi\nabla_i \Phi}{(\lambda')^2}=\frac{1}{\lambda'^2}\nabla_i\Phi.
\end{eqnarray}
The assumption $\left(\frac{f(s)}{s}\right)'\geq 0$ is equivalent to
$f'(s)s\geq f(s)$ for all $s>0.$
Substituting (\ref {eqH1'}) into (\ref{eqH1}) and using the positive definiteness of $T_1$, we conclude
\begin{equation}\label{eqI'}
I\leq 0.
\end{equation}
The inequality \eqref{eqI'} is strict unless $\nabla\Phi\equiv 0$ on $\Sigma_t$ , which means that $\Sigma_t$ is a geodesic sphere centered at the origin.

Next, applying (\ref{nablak}) for $k=1$ and integrating by parts,  we deduce that
\begin{align*}
&II=\int_{\Sigma_t} f'(\Phi)\frac{u}{\lambda'}(\lambda'-u H_1)d\mu_t=\frac{1}{n-1} \int_{\Sigma_t} f'(\Phi)\frac{u}{\lambda'}\nabla_i(\delta_i^j\nabla_j\Phi)d\mu_t\\
=&-\frac{1}{n-1}\int_{\Sigma_t}f''(\Phi)\frac{u}{\lambda'} \|\nabla\Phi\|^2 d\mu_t
-\frac{1}{n-1}\int_{\Sigma_t}f'(\Phi)\delta_i^j\nabla_i\left( \frac{u}{\lambda'}\right)\nabla_j\Phi d\mu_t.
\end{align*}
Applying identities (\ref{nabla u}) and (\ref{relation}), we compute
$$
\nabla_i\left(\frac{u}{\lambda'}\right)=\frac{h_i^s(\nabla_s \Phi) \lambda'-u\nabla_i \lambda'}{(\lambda')^2}
=\frac{\left(h_i^s \lambda'- u\delta_i^s\right) \nabla_s\Phi}{(\lambda')^2}.
$$
 For a static convex hypersurface $\Sigma_t \subset \H^n$, the matrix
$$\left(h_i^s \lambda'- u\delta_i^s\right)>0$$
is positive definite.  Combining this with the non-negativity of $f'(\Phi),$ we deduce
$$-\frac{1}{n-1}\int_{\Sigma_t}f'(\Phi)\delta_i^j\nabla_i\left( \frac{u}{\lambda'}\right)\nabla_j\Phi d\mu_t\leq 0,$$
which implies
\begin{equation}\label{eqH2}
II\leq -\frac{1}{n-1}\int_{\Sigma_t}f''(\Phi)\frac{u}{\lambda'} \|\nabla\Phi\|^2 d\mu_t.
\end{equation}
Consequently, the combined terms satisfy
\begin{equation}\label{H1}
  II+IV\leq -\frac{1}{n-1}\int_{\Sigma_t}f''(\Phi)\frac {1}{H_1}\|\nabla\Phi\|^2 d\mu_t.
\end{equation}
Substituting  (\ref{H3}), (\ref{eqI'})and (\ref{eqH1}) into (\ref{eqH}), we conclude that
\begin{align}\label{eq6}
&\frac{d}{dt}\int_{\Sigma_t} \left(\int_{\Sigma_t} f(\Phi) H_1 d\mu_t-\int_{\Omega_t} f(\Phi)dv_t\right)\nonumber\\
\leq&~-\frac{1}{n-1}\int_{\Sigma_t}f''(\Phi)\frac {1}{H_1}\|\nabla\Phi\|^2 d\mu_t\nonumber\\\
\leq&~ 0,
\end{align}
where we used $f''(\Phi)\geq 0$ in the last inequality. Moreover, the inequality is strict unless $\nabla\Phi\equiv 0$ on $\Sigma_t$ , which means that $\Sigma_t$ is a geodesic sphere centered at the origin.
We complete the proof.
\end{proof}
\noindent{\it Proof of Theorem 1.4.}   The area of $\Sigma_t$ evolves as
\begin{eqnarray*}
\frac{d}{dt}|\Sigma_t|&=&(n-1)\int_{\Sigma_t} H_1\left(\frac{1}{H_1}-\frac{u}{\lambda'(r)}\right)d\mu_t\\
&=& (n-1)\int_{\Sigma_t} \frac 1{\lambda'(r)}(\lambda'(r)-H_1 u)d\mu\\
&=& \int_{\Sigma_t}\frac 1 {\lambda'(r)} \nabla_i\left(\delta^{i}_j\nabla_j\Phi\right)d\mu_t\\
&=& \int_{\Sigma_t}\frac 1 {\left(\lambda'(r)\right)^2}\, \|\nabla\Phi\|^2 d\mu_t\\
&\geq&0,
\end{eqnarray*}
with equality holds if and only if $\Sigma_t$ is a geodesic sphere,
where we used \eqref{nablak} and integration by parts. Meanwhile, by the coarea formula,  the weighted volume evolves as
\begin{equation}
\frac{d}{dt}\int_{\Omega_t}\lambda' dv= \int_{\Sigma_t}\lambda'\left(\frac{1}{H_1}- \frac{u}{\lambda'}\right)d\mu_t=\int_{\Sigma_t}\left(\frac{\lambda'}{H_1}- u\right)d\mu_t\geq 0,
\end{equation}
where the inequality follows from the Heintze-Karcher inequality in $\H^n$ proved by Brendle \cite{B13}.

Applying the convergence result of the flow \eqref{flow1}, we obtain the inequalities \eqref{W3} and  \eqref{W3'}. The equality cases in \eqref{W3} and  \eqref{W3'} can be treated similarly as in Theorem 1.1.

\subsection{The sphere as the ambient space}

\begin{theo}
Suppose that $\Sigma_t$ is a family of strictly convex hypersurfaces which is the solution of the flow \eqref{flow1} in $\S^n$. Then for any non-decreasing convex $C^2$ positive funciton $f$,
 we have the monotonicity of the general weighted  mean curvature integral
$$\frac{d}{dt}\left(\int_{\Sigma_t} f(\Phi) H_1 d\mu_t+\int_{\Omega_t} f(\Phi)dv_t\right)\leq 0,$$
along the flow \eqref{flow}.  Moreover, equality  holds if and
only if $\Omega_t$ is a geodesic ball centered at the origin.
\end{theo}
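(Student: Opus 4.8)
The plan is to run the hyperbolic argument of Theorem 5.1 essentially verbatim, exploiting one structural observation: the integrand on the right-hand side of the $k=1$ variational formula \eqref{Vp1} is free of $\v$, so the algebraic decomposition is \emph{identical} in $\S^n$; only the gradient computations change, through the sphere identity \eqref{relation}, $\lambda'(r)+\Phi=1$, which gives $\nabla_i\lambda'=-\nabla_i\Phi$ (the opposite sign to the hyperbolic case $\lambda'-\Phi=1$). First I would take $F=\frac{1}{H_1}-\frac{u}{\lambda'}$ in \eqref{Vp1} with $\v=1$; its left-hand side is exactly $\frac{d}{dt}\big(\int_{\Sigma_t}f(\Phi)H_1\,d\mu_t+\int_{\Omega_t}f(\Phi)\,dv_t\big)$, the quantity to be bounded. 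Expanding and regrouping the product exactly as in \eqref{eqH} produces a decomposition $I+II+III+IV$ with
\begin{align*}
I&=\int_{\Sigma_t}(n-2)\tfrac{f(\Phi)}{\lambda'}\Big(\lambda'\tfrac{H_2}{H_1}-uH_2\Big)\,d\mu_t,\\
II&=\int_{\Sigma_t}f'(\Phi)\tfrac{u}{\lambda'}(\lambda'-uH_1)\,d\mu_t,\\
III&=-\int_{\Sigma_t}f'(\Phi)\lambda'H_1\Big(\tfrac{1}{H_1}-\tfrac{u}{\lambda'}\Big)^2 d\mu_t,\\
IV&=-\tfrac{1}{n-1}\int_{\Sigma_t}f''(\Phi)\|\nabla\Phi\|^2\Big(\tfrac{1}{H_1}-\tfrac{u}{\lambda'}\Big)\,d\mu_t,
\end{align*}
where $III\le0$ follows at once from $f'\ge0$. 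Throughout I would use that a closed strictly convex hypersurface of $\S^n$ enclosing the origin lies in an open hemisphere, so that $\lambda'=\cos r>0$, $u>0$ and $T_1>0$ hold along the flow, the convexity being preserved by \cite{SX}.

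For $I$, I would first apply the Newton-Maclaurin inequality \eqref{N-M} in the form $H_2\le H_1^2$ (legitimate since the coefficient $(n-2)f(\Phi)/\lambda'$ is nonnegative) to replace $\lambda'\tfrac{H_2}{H_1}$ by $\lambda'H_1$, then rewrite $\lambda'H_1-uH_2=\tfrac{1}{2C_{n-1}^2}\nabla_i(T_1^{ij}\nabla_j\Phi)$ via \eqref{nablak} and integrate by parts, obtaining
$$I\le-\tfrac{1}{n-1}\int_{\Sigma_t}\nabla_i\Big(\tfrac{f(\Phi)}{\lambda'}\Big)T_1^{ij}\nabla_j\Phi\,d\mu_t.$$
Here the sphere sign is decisive: $\nabla_i\lambda'=-\nabla_i\Phi$ gives $\nabla_i\big(\tfrac{f(\Phi)}{\lambda'}\big)=\tfrac{f'(\Phi)\lambda'+f(\Phi)}{\lambda'^2}\nabla_i\Phi$ with a \emph{positive} coefficient, whereas the hyperbolic computation produced $\tfrac{f'\lambda'-f}{\lambda'^2}$ and hence required the extra hypothesis $(f/s)'\ge0$. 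Positive definiteness of $T_1$ then yields $I\le0$, and strictly so unless $\nabla\Phi\equiv0$.

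For $II$, I would use \eqref{nablak} with $k=1$ to write $\lambda'-uH_1=\tfrac{1}{n-1}\nabla_i(\delta_i^j\nabla_j\Phi)$ and integrate by parts. Computing $\nabla_i\big(\tfrac{u}{\lambda'}\big)$ from \eqref{nabla u} and $\nabla_i\lambda'=-\nabla_i\Phi$ gives $\tfrac{1}{\lambda'^2}(h_i^s\lambda'+u\delta_i^s)\nabla_s\Phi$; the point is that for $\v=1$ the relevant Weingarten-type matrix is $h_i^s\lambda'+u\delta_i^s$, which carries a \emph{plus} sign and is automatically positive for strictly convex $\Sigma_t$ with $u>0$, whereas in $\H^n$ the matrix $h_i^s\lambda'-u\delta_i^s$ forced the static-convexity assumption. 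Hence $II\le-\tfrac{1}{n-1}\int_{\Sigma_t}f''(\Phi)\tfrac{u}{\lambda'}\|\nabla\Phi\|^2 d\mu_t$, and adding $IV$ the $\tfrac{u}{\lambda'}$ contributions cancel to leave
$$II+IV\le-\tfrac{1}{n-1}\int_{\Sigma_t}f''(\Phi)\tfrac{1}{H_1}\|\nabla\Phi\|^2 d\mu_t\le0$$
by the convexity $f''\ge0$. Summing, $I+II+III+IV\le0$, and since $I$ is strict unless $\nabla\Phi\equiv0$, equality in the monotonicity holds precisely when $\Sigma_t$ is a geodesic sphere centered at the origin.

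I expect the main obstacle to be purely organizational: faithfully tracking the sign reversals induced by $\v=1$ through each integration by parts, and confirming that the two borderline terms $I$ and $II$—which in $\H^n$ demanded $(f/s)'\ge0$ and static convexity respectively—acquire favorable signs for free, so that strict convexity together with $f,f',f''\ge0$ alone suffice (this is why Theorem 1.6 imposes no hypothesis beyond $f$ being non-decreasing, convex and positive). The only genuine analytic input beyond this bookkeeping is the a priori positivity $\lambda'>0$, $u>0$ and $T_1>0$, which I would justify by the confinement of a closed strictly convex hypersurface of $\S^n$ to an open hemisphere together with the convexity-preservation of the flow \eqref{flow1} from \cite{SX}.
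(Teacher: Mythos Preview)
Your proposal is correct and follows essentially the same approach as the paper: the same $I+II+III+IV$ decomposition from \eqref{Vp1}, the same use of Newton--Maclaurin and \eqref{nablak} with integration by parts for $I$ and $II$, and the same observation that in $\S^n$ the sign change $\nabla_i\lambda'=-\nabla_i\Phi$ makes both borderline terms nonpositive without the extra hypotheses needed in the hyperbolic case. Your added remark that $\lambda'>0$ and $u>0$ follow from confinement to an open hemisphere is a small clarification the paper leaves implicit, but otherwise the arguments coincide.
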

\begin{proof}
Choosing $F=\frac {1}{H_1}-\frac{u}{\lambda'}$ in (\ref{Vp1}),  we have
\begin{align}\label{eqH'}
&\frac{d}{dt} \left(\int_{\Sigma_t} f(\Phi) H_1 d\mu_t+\int_{\Omega_t} f(\Phi)dv_t\right)\nonumber\\
=& \int_{\Sigma_t}\bigg((n-2)\,f(\Phi) H_{2}+2 f'(\Phi){u} H_1 - f'(\Phi)\lambda'(r)-\frac{1}{n-1}f''(\Phi)\|\nabla\Phi\|^2\bigg)
\bigg(\frac {1}{H_1}-\frac{u}{\lambda'}\bigg)d\mu_t\nonumber\\
=&\int_{\Sigma_t}(n-2)\frac{f(\Phi)}{\lambda'} \bigg(\lambda' \frac{H_2}{H_1}-u H_{2}\bigg)d\mu_t+\int_{\Sigma_t} f'(\Phi)\frac{u}{\lambda'}(\lambda'-u H_1)d\mu_t\nonumber\\
&-\int_{\Sigma_t}f'(\Phi)\lambda' H_1\bigg(\frac{1}{H_1}-\frac{u}{\lambda'}\bigg)^2d\mu_t-\frac{1}{n-1}\int_{\Sigma_t}f''(\Phi)\|\nabla\Phi\|^2\bigg(\frac {1}{H_1}-\frac{u}{\lambda'}\bigg)d\mu_t\nonumber\\
=&\; I+II+III+IV.
\end{align}
We estimate each term separately.  It follows from    $f'(\Phi)\geq 0$ that
\begin{equation}\label{H3'}
 III\leq 0.
\end{equation}
Using the  Newton-Maclaurin inequality (\ref{N-M}),  (\ref{nablak}) and integration by parts, we have
\begin{eqnarray}\label{eqH1'}
I&=&\int_{\Sigma_t}(n-2)\frac{f(\Phi)}{\lambda'} \bigg(\lambda' \frac{H_2}{H_1}-uH_{2}\bigg)d\mu_t\nonumber\\
&\leq& \int_{\Sigma_t}(n-2)f(\Phi)\frac{1}{\lambda'} \bigg(\lambda' H_1-uH_{2}\bigg)d\mu_t\nonumber\\
&=& \frac{1}{2C_{n-1}^{2}}\int_{\Sigma_t}(n-2)f(\Phi)\frac{1}{\lambda'}\nabla_i\left(T_{1}^{ij}\nabla_j\Phi\right)d\mu_t\nonumber\\
&=& -\frac{1}{n-1}\int_{\Sigma_t} \nabla_i \left(\frac{f(\Phi)}{\lambda'}\right)T_{1}^{ij}\nabla_j\Phi d\mu_t\nonumber\\
&=& -\frac{1}{n-1}\left(\int_{\Sigma_t}\frac{f'(\Phi)}{\lambda'} T_{1}^{ij}\nabla_i\Phi\nabla_j\Phi d\mu_t+\int_{\Sigma_t} \frac{f(\Phi)}{(\lambda')^2}T_{1}^{ij}\nabla_i\Phi\nabla_j\Phi d\mu_t\right),
\end{eqnarray}
where we used $\nabla_i \lambda'=-\nabla_i\Phi$ in the sphere $\S^n$.
Hence the positive definiteness of $T_1$ and the fact that $f'\geq 0,\, f>0$ yield
\begin{equation}\label{eqI1}
I\leq 0.
\end{equation}
The inequality \eqref{eqI1} is strict unless $\nabla\Phi\equiv 0$ on $\Sigma_t$ , which means that $\Sigma_t$ is a geodesic sphere centered at the origin.

Next, applying (\ref{nablak}) for $k=1$ and integrating by parts,  we obtain
\begin{align*}
&II=\int_{\Sigma_t} f'(\Phi)\frac{u}{\lambda'}(\lambda'-u H_1)d\mu_t=\frac{1}{n-1} \int_{\Sigma_t} f'(\Phi)\frac{u}{\lambda'}\nabla_i(\delta_i^j\nabla_j\Phi)d\mu_t\\
=&-\frac{1}{n-1}\int_{\Sigma_t}f''(\Phi)\frac{u}{\lambda'} \|\nabla\Phi\|^2 d\mu_t
-\frac{1}{n-1}\int_{\Sigma_t}f'(\Phi)\delta_i^j\nabla_i\left( \frac{u}{\lambda'}\right)\nabla_j\Phi d\mu_t.
\end{align*}
Using the facts (\ref{nabla u}) and (\ref{relation}), we calculate that
$$
\nabla_i\left(\frac{u}{\lambda'}\right)=\frac{h_i^s(\nabla_s \Phi) \lambda'-u\nabla_i \lambda'}{(\lambda')^2}
=\frac{\left(h_i^s \lambda'+ u\delta_i^s\right) \nabla_s\Phi}{(\lambda')^2}.
$$
For a strictly convex hypersurface $\Sigma_t \subset \S^n$,  the matrix
$$\left(h_i^s \lambda'+u\delta_i^s\right)>0$$
is positive definite.
Combining this with the non-negativity of $f'(\Phi),$ we have
$$-\frac{1}{n-1}\int_{\Sigma_t}f'(\Phi)\delta_i^j\nabla_i\left( \frac{u}{\lambda'}\right)\nabla_j\Phi d\mu_t\leq 0,$$
which yields
\begin{equation}\label{eqH2'}
II\leq -\frac{1}{n-1}\int_{\Sigma_t}f''(\Phi)\frac{u}{\lambda'} \|\nabla\Phi\|^2 d\mu_t.
\end{equation}
Consequently,
\begin{equation}\label{H1}
  II+IV\leq -\frac{1}{n-1}\int_{\Sigma_t}f''(\Phi)\frac {1}{H_1}\|\nabla\Phi\|^2 d\mu_t.
\end{equation}
Substituting  (\ref{H3'}), (\ref{eqI1})and (\ref{eqH1'}) into (\ref{eqH'}), we conclude that
\begin{align}\label{eq6}
&\frac{d}{dt}\int_{\Sigma_t} \left(\int_{\Sigma_t} f(\Phi) H_1 d\mu_t+\int_{\Omega_t} f(\Phi)dv_t\right)\nonumber\\
\leq&~-\frac{1}{n-1}\int_{\Sigma_t}f''(\Phi)\frac {1}{H_1}\|\nabla\Phi\|^2 d\mu_t\nonumber\\\
\leq&~ 0,
\end{align}
where we used $f''(\Phi)\geq 0$ in the last inequality. Moreover, the inequality is strict unless $\nabla\Phi\equiv 0$ on $\Sigma_t$ , which means that $\Sigma_t$ is a geodesic sphere centered at the origin.
\end{proof}
\noindent{\it Proof of Theorem 1.6.}
By the coarea formula,  we have
\begin{equation}
\frac{d}{dt}\int_{\Omega_t}\lambda' dv= \int_{\Sigma_t}\lambda'\left(\frac{1}{H_1}- \frac{u}{\lambda'}\right)d\mu_t=\int_{\Sigma_t}\left(\frac{\lambda'}{H_1}- u\right)d\mu_t\geq 0,
\end{equation}
where the inequality follows from the Heintze-Karcher inequality in $\S^n$ proved by Brendle \cite{B13}.   Therefore, Theorem $1.6$ is deduced from Theorem $5.2$ and the convergence result of the flow \eqref{flow1} in $\S^n$.
\vspace{5mm}

{\bf Acknowledgements}
The author would like to thank  Professor Yong Wei for helpful discussions.
 The research is partially supported by National Key R$\&$D Program of China(No. 2022YFA1005500) and Natural Science Foundation of China under Grant No. 11731001.

\vspace{5mm}

\end{document}